\newtheorem{theorem}{Theorem}
\newtheorem*{theorem*}{Theorem}
\newtheorem{lemma}{Lemma}
\newtheorem{corollary}{Corollary}
\theoremstyle{definition}
\newtheorem*{definition*}{\sc Definition}
\newtheorem*{remark*}{\sc Remark}
\newtheorem*{remarks}{\sc Remarks}
\newtheorem*{example*}{\sc Example}
\newcommand{\thistheoremname}{}
\newtheorem*{genericthm*}{\thistheoremname}
\newenvironment{namedthm*}[1]
  {\renewcommand{\thistheoremname}{#1}%
   \begin{genericthm*}}
  {\end{genericthm*}}
\newcommand{\loc}{{\rm loc}}
\newcommand{\clos}{{\rm clos}}
\begin{document}

\title{$W^{1,p}$ regularity of solutions to Kolmogorov equation  
and associated Feller semigroup}

\author{D.\,Kinzebulatov and Yu.\,A.\,Sem\"{e}nov}

\address{Universit\'{e} Laval, D\'{e}partement de math\'{e}matiques et de statistique, 1045 av.\,de la M\'{e}decine, Qu\'{e}bec, QC, G1V 0A6, Canada {\sf http://archimede.mat.ulaval.ca/pages/kinzebulatov}}

\email{damir.kinzebulatov@mat.ulaval.ca}

\thanks{The research of the first author is supported in part by NSERC}

\address{University of Toronto, Department of Mathematics, 40 St.\,George Str, Toronto, ON, M5S 2E4, Canada}

\email{semenov.yu.a@gmail.com}

\keywords{Elliptic operators, form-bounded vector fields, regularity of solutions, Feller semigroups}

\subjclass[2000]{31C25, 47B44 (primary), 35D70 (secondary)}

\begin{abstract}
 In $\mathbb R^d$, $d \geq 3$, consider the divergence and the non-divergence form operators 
\begin{equation}
\label{div0_}
\tag{$i$}
- \nabla \cdot a \cdot \nabla + b \cdot \nabla, 
\end{equation}
\begin{equation}
\label{nondiv0_}
\tag{$ii$}
- a \cdot \nabla^2 + b \cdot \nabla,
\end{equation}
where $a=I+c \mathsf{f} \otimes \mathsf{f}$, the vector fields $\nabla_i \mathsf{f}$ ($i=1,2,\dots,d$) and $b$ are form-bounded (this includes the sub-critical class $[L^d + L^\infty]^d$ as well as vector fields having critical-order singularities).
We characterize quantitative dependence on $c$ and the values of the form-bounds of the $L^q \rightarrow W^{1,qd/(d-2)}$ regularity of the resolvents of the operator realizations of \eqref{div0_}, \eqref{nondiv0_}  in $L^q$, $q \geq 2 \vee ( d-2)$ as (minus) generators of positivity preserving $L^\infty$ contraction $C_0$ semigroups. The latter allows to run an iteration procedure $L^p \rightarrow L^\infty$ that yields associated with \eqref{div0_}, \eqref{nondiv0_} $L^q$-strong Feller semigroups. 
\end{abstract}

\maketitle

\textbf{1.~}Consider in $\mathbb R^d$, $d \geq 3$, the formal differential operator
\begin{equation}
\label{div0}
-\nabla \cdot a \cdot \nabla + b \cdot \nabla \equiv -\sum_{i,j=1}^d \nabla_i\, a_{ij}(x) \nabla_j + \sum_{j=1}^d b_j(x) \nabla_j,
\end{equation}
where
\begin{equation}
\label{H}
\tag{$H_u$}
\begin{array}{l}
a=a^*:\mathbb R^d \rightarrow \mathbb{R}^d \otimes \mathbb{R}^d \quad \text{is $\mathcal L^d$ measurable}, \\
\sigma I \leq a(x)  \leq \xi I \quad \text{ for $\mathcal L^d$ a.e.\,$x \in \mathbb R^d$ for some $0<\sigma \leq \xi<\infty$}.
\end{array}
\end{equation}
By the De Giorgi-Nash theory, the solution $u \in W^{1,2}(\mathbb R^d)$ to the corresponding elliptic equation $(\mu -\nabla \cdot a \cdot \nabla + b \cdot \nabla) u = f$, $\mu>0$, $f \in L^p\cap L^2$, $p \in ]\frac{d}{2},\infty[$, is in $C^{0,\gamma}$, where the H\"{o}lder continuity exponent $\gamma \in ]0,1[$ depends only on $d$ and $\sigma$, $\xi$, provided that $b:\mathbb R^d \rightarrow \mathbb R^d$ is in the Nash class ($\supset [L^p + L^\infty]^d$, $p>d$) \cite{Se}, but already the class $[L^d + L^\infty]^d$ is not admissible (e.g.\,it is easy to find $b \in [L^d + L^\infty]^d$ that makes the two-sided Gaussian bounds on the fundamental solution of \eqref{div0} invalid).
On the other hand, for $-\Delta +b \cdot \nabla$, the $C^{0,\gamma}$ regularity of solutions to the corresponding elliptic equations 
is known to hold for $b$ having much stronger singularities. Recall that a vector field $b:\mathbb R^d \rightarrow \mathbb R^d$ is in the class  of form-bounded vector fields $\mathbf{F}_\delta \equiv \mathbf{F}_\delta(-\Delta)$, $\delta>0$ if $|b| \in L^2_{\loc} \equiv L^2_{\loc}(\mathbb R^d)$ and 
there exist a constant $\lambda=\lambda_\delta>0$ such that 
$$
\||b|(\lambda -\Delta)^{-\frac{1}{2}}\|_{2\rightarrow 2}\leqslant \sqrt{\delta}.
$$
(The class $\mathbf{F}_\delta$ contains $[L^d + L^\infty]^d$ with $\delta$ arbitrarily small, as follows by  the Sobolev Embedding Theorem, as well as vector fields having critical-order singularities such as $b(x)=\frac{d-2}{2}\sqrt{\delta}|x|^{-2}x$ (by Hardy's inequality) or, more generally, vector fields in $[L^{d,\infty}+L^\infty]^d$, the Campanato-Morrey class or the Chang-Wilson-T.\,Wolff class, with $\delta$ depending on the norm of the vector field in these classes, see e.g.\,\cite{KiS} for details.)
It has been established in \cite{KS} that if $b \in \mathbf{F}_\delta$, $\delta<1$,
then for every $q \in [2,2/\sqrt{\delta}[$ $-\Delta +b \cdot \nabla$ has an operator realization $\Lambda_q(b)$ on $L^q$ as the generator  of a positivity preserving, $L^\infty$ contraction, quasi contraction $C_0$ semigroup $e^{-t\Lambda_q(b)}$  such that $u:=(\mu+\Lambda_q(b))^{-1}f$, $f \in L^q$ satisfies
\begin{equation*}
\|\nabla u\|_q \leq K_1(\mu-\mu_0)^{-\frac{1}{2}}\|f\|_q,  \qquad
\|\nabla|\nabla u|^\frac{q}{2}\|_2^{\frac{2}{q}} \leq K_2(\mu-\mu_0)^{\frac{1}{q}-\frac{1}{2}}\|f\|_q, \quad \mu>\mu_0,
\end{equation*}
for some constants $\mu_0 \equiv \mu_0(d,q,\delta)>0$ and $K_i=K_i(d,q,\delta)>0$, $i=1,2$.
In particular, if $\delta<1 \wedge \big(\frac{2}{d-2}\bigr)^2$, there exists $q>2 \vee (d-2)$ such that $u \in C^{0,\gamma}$, $\gamma=1-\frac{d-2}{q}$. The explicit dependence of the regularity properties of $u$ on $\delta$
(which effectively plays the role of a ``coupling constant'') is a crucial feature of the result in \cite{KS}.

In the present paper our concern is: to find a class of matrices $a \in (H_u)$ such that the operator \eqref{div0}  with $b \in \mathbf{F}_\delta$ admits a $W^{1,p}$ and $C^{0,\gamma}$ regularity theory.
Below we consider
\begin{equation}
\label{a}
\tag{$\star$}
a=I+c\,\mathsf{f} \otimes \mathsf{f}, \quad c>-1, \quad \mathsf{f} \in \bigl[L^\infty \cap W_{\loc}^{1,2}\bigr]^d, \quad \|\mathsf{f}\|_\infty=1,
\end{equation}
\begin{equation}
\label{C}
\tag{$\mathbf{C}_{\delta_{\mathsf{f}}}$}
\begin{array}{c}
\nabla_i \mathsf{f} \in \mathbf{F}_{\delta^i},\;\;\delta^i>0,\;\;i=1,2,\dots,d, \quad
 \delta_{\mathsf{f}}:=\sum_{i=1}^d\delta^i.
\end{array}
\end{equation}
The model example of such $a$ is the matrix
\begin{equation}
\label{gs}
a(x)=I+c|x|^{-2}x\otimes x, \quad x \in \mathbb R^d
\end{equation}
having critical discontinuity at the origin, see \cite{GS2,GS,KiS3,OG} and references therein. (Replacing  the requirement $\nabla_i \mathsf{f} \in \mathbf{F}_{\delta^i}$ by a more restrictive $\nabla_i \mathsf{f} \in [L^p + L^\infty]^d$, $p>d$, forces $a$ to be H\"{o}lder continuous. 
On the other hand, a weaker condition $\nabla_i \mathsf{f} \in [L^p + L^\infty]^d$, $p<d$, is incompatible with the uniform ellipticity of $a$. 
The condition \eqref{C} ($\supsetneq \nabla_i \mathsf{f} \in [L^d + L^\infty]^d$) seems to be rather natural. We also note that the operator $-a\cdot\nabla^2$ with $\nabla_k a_{ij} \in L^{d,\infty}$ has been studied earlier  in \cite{AT}, cf.\,the discussion below concerning the non-divergence form operators.)

In Theorems \ref{thm:reg0}, \ref{thm:reg} below we characterize quantitative dependence on $c$, $\delta$, $\delta_{\mathsf{f}}$ of the $L^q \rightarrow W^{1,qd/(d-2)}$ regularity of the resolvent of the operator realization of \eqref{div0}  as (minus) generator of positivity preserving $L^\infty$ contraction $C_0$ semigroups in $L^q$, $q \geq 2 \vee ( d-2)$. 

Consider the non-divergence form operator
\begin{equation}
\label{nondiv0}
-a \cdot \nabla^2 + b \cdot \nabla \equiv -\sum_{i,j=1}^d a_{ij}(x) \nabla_i\nabla_j + \sum_{j=1}^d b_j(x) \nabla_j.
\end{equation}
Write
$
-a \cdot \nabla^2 + b \cdot \nabla \equiv -\nabla \cdot a \cdot \nabla + (\nabla a + b )\cdot \nabla,
$
where $(\nabla a)_k := \sum_{i=1}^d (\nabla_i a_{ik})$, $k=1,2,\dots,d$. Then
$$\nabla a=c\big[({\rm div}\mathsf{f})\mathsf{f} + \mathsf{f} \cdot \nabla\mathsf{f}\big].$$
It is easily seen that the condition \eqref{C} yields $\nabla a \in \mathbf{F}_{\delta_a}$ with $\delta_a \leq |c|^2(\sqrt{d}+1)^2\delta_{\mathsf{f}}$. The latter yields an analogue of Theorem \ref{thm:reg} for \eqref{nondiv0} (Corollary \ref{thm:reg2} below).

Theorem \ref{thm:reg} and Corollary \ref{thm:reg2} are needed to run an iteration procedure $L^p \rightarrow L^\infty$ that yields associated with \eqref{div0}, \eqref{nondiv0} Feller semigroups on $C_\infty=C_\infty(\mathbb R^d)$ (the space of all continuous functions vanishing at infinity endowed with the $\sup$-norm), see Theorem \ref{thm:feller} and Corollary \ref{thm:feller2} below.

In the same manner as it was done in \cite{KiS2} for the operator $-\Delta + b \cdot \nabla$, the Feller process constructed in Corollary \ref{thm:feller2} admits a characterization as a weak solution to the stochastic differential equation
$$
dX_t=-b(X_t)dt+\sqrt{2a(X_t)}dW_t, \quad X_0=x_0 \in \mathbb R^d.
$$
We plan to address this matter in another paper.

All the proofs below work for
\begin{equation}
\label{C2}
a=I+\sum_{j=1}^\infty c_j \mathsf{f}_j \otimes \mathsf{f}_j, \quad \|\mathsf{f}_j\|_\infty=1,
\end{equation}
with $\mathsf{f}_j$ satisfying \eqref{C}, and $c_+:=\sum_{c_j>0}c_j<\infty$, $c_-:=\sum_{c_j<0}c_j>-1$.
(A decomposition \eqref{C2} can be obtained from the spectral decomposition of a general uniformly elliptic $a$.)

\medskip

\textbf{2.~}We now state our results in full.

\begin{theorem}[$-\nabla \cdot a \cdot \nabla$]
\label{thm:reg0}
Let $d \geq 3$.
Let $a=I+c\,\mathsf{f} \otimes \mathsf{f}$ be given by \eqref{a}. 

\smallskip

{\rm(\textit{i})}  The formal differential expression $-\nabla \cdot a \cdot \nabla$ has an operator realization  $A_q$ in $L^q$ for all $q \in \big[1,\infty[$ 
as the (minus) generator of a  positivity preserving $L^\infty$ contraction $C_0$ semigroup.   

\smallskip

{\rm(\textit{ii})} 
Assume that \eqref{C} holds with $\delta_{\mathsf{f}}$, $c$ and $q \geq 2 \vee (d-2)$ satisfying the following constraint:
$$
-\bigl(1+q\sqrt{\delta_{\mathsf{f}}}\bigr)^{-1}<c<
\left\{
\begin{array}{ll}
16\bigl[q\sqrt{\delta_{\mathsf{f}}}\bigl(8+q\sqrt{\delta_{\mathsf{f}}} \bigr)\bigr]^{-1} & \text{ if } q\sqrt{\delta_{\mathsf{f}}}\leq 4, \\
\bigl(q\sqrt{\delta_{\mathsf{f}}}-1 \bigr)^{-1} & \text{ if } q\sqrt{\delta_{\mathsf{f}}}\geq 4.
\end{array}
\right.
$$
Then, for each $\mu>0$ and $f \in L^q$, $u:=(\mu+A_q)^{-1}f$ belongs to $W^{1,q} \cap W^{1,\frac{qd}{d-2}}$. Moreover, there exist constants 
$\mu_0=\mu_0(d,q,c,\delta_{\mathsf{f}})>0$ and $K_l=K_l(d,q,c,\delta_{\mathsf{f}})$, $l=1,2$, such that, for all $\mu>\mu_0$, 
\begin{equation}
\tag{$\star\star
$}
\label{reg_double_star}
\begin{array}{c}
\|\nabla u \|_{q} \leq  K_1(\mu-\mu_0)^{-\frac{1}{2}} \|f\|_q, \\[2mm]
\|\nabla u \|_{\frac{qd}{d-2}} \leq  K_2 (\mu-\mu_0)^{\frac{1}{q}-\frac{1}{2}} \|f\|_q.
\end{array}
\end{equation}
\end{theorem}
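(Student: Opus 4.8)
The plan is to construct $A_q$ by approximation and to derive the a priori bounds \eqref{reg_double_star} via an energy (quadratic form) argument on smooth regularizations. First I would smooth the matrix: replace $\mathsf{f}$ by $\mathsf{f}_\varepsilon:=\mathsf{f}\ast\rho_\varepsilon$, so $a_\varepsilon:=I+c\,\mathsf{f}_\varepsilon\otimes\mathsf{f}_\varepsilon$ is smooth, still satisfies $(H_u)$ uniformly in $\varepsilon$ (with $\sigma=1\wedge(1+c)$, $\xi=1\vee(1+c)$), and $\nabla_i\mathsf{f}_\varepsilon\in\mathbf{F}_{\delta^i}$ with form-bound constants not worse than the originals (mollification does not increase the form-bound, by standard arguments). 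For each $\varepsilon>0$ the operator $A_q^\varepsilon:=-\nabla\cdot a_\varepsilon\cdot\nabla$ on $L^q$ is a classical uniformly elliptic operator in divergence form with smooth bounded coefficients; part (\textit{i}) — existence of the realization generating a positivity preserving $L^\infty$ contraction $C_0$ semigroup on every $L^q$, $q\in[1,\infty[$ — follows from Dirichlet-form theory for $q=2$ together with the Markovian (hence $L^1$–$L^\infty$ interpolation-compatible) structure, and one passes $\varepsilon\to0$ using the uniform ellipticity bounds; alternatively one invokes the quoted De Giorgi–Nash framework since here $b=0$.

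The core is the quantitative bound (\textit{ii}), and the key identity is integration by parts applied to $\langle (\mu+A_q^\varepsilon)u_\varepsilon,\, |u_\varepsilon|^{q-2}u_\varepsilon\rangle$ and, more importantly, to the gradient bound, where one tests against $|\nabla u_\varepsilon|^{q-2}\nabla u_\varepsilon$ after differentiating the equation — i.e. one runs the Moser-type iteration / Bernstein technique at the level of $w_\varepsilon:=|\nabla u_\varepsilon|^{q/2}$. Writing $a_\varepsilon=I+c\,\mathsf{f}_\varepsilon\otimes\mathsf{f}_\varepsilon$ and expanding, the ``bad'' terms that arise all carry a factor $c$ and a factor $\nabla_i\mathsf{f}_\varepsilon$ (coming from $\nabla_k a_{ij}^\varepsilon=c[(\nabla_k\mathsf{f}^i_\varepsilon)\mathsf{f}^j_\varepsilon+\mathsf{f}^i_\varepsilon(\nabla_k\mathsf{f}^j_\varepsilon)]$), and since $\|\mathsf{f}_\varepsilon\|_\infty\le1$ these are estimated by the form-boundedness of $\nabla_i\mathsf{f}_\varepsilon$ against the Dirichlet form of $-\Delta$ applied to $w_\varepsilon$, i.e. by $\sqrt{\delta^i}\,\|\nabla w_\varepsilon\|_2$ (up to lower-order terms absorbed into $\mu$). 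Collecting terms, the desired inequality $\|\nabla w_\varepsilon\|_2^2\lesssim\|f\|_q\,\cdots$ holds provided the total coefficient multiplying $\|\nabla w_\varepsilon\|_2^2$ on the right is $<1$; chasing the constants through the expansion — the $\frac{q-2}{q}$-type weights from the $w_\varepsilon$ substitution, the Cauchy–Schwarz splittings, and the two regimes $q\sqrt{\delta_{\mathsf{f}}}\lessgtr4$ — is exactly what produces the stated constraint on $c$ (the lower bound $c>-(1+q\sqrt{\delta_{\mathsf{f}}})^{-1}$ ensures the principal part stays coercive after the $\mathsf{f}\otimes\mathsf{f}$ contribution is moved around, and the upper bound controls the positive-$c$ cross terms). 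Once $\|\nabla w_\varepsilon\|_2$ is bounded, the second estimate in \eqref{reg_double_star} follows from the Sobolev embedding $\|w_\varepsilon\|_{2d/(d-2)}\lesssim\|\nabla w_\varepsilon\|_2$ read back in terms of $\|\nabla u_\varepsilon\|_{qd/(d-2)}$, and the first estimate comes similarly (or by a simpler testing against $|\nabla u_\varepsilon|^{q-2}\nabla u_\varepsilon$ with the $\mu$-term providing the gain $(\mu-\mu_0)^{-1/2}$).

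I would finish by passing to the limit $\varepsilon\to0$: the bounds \eqref{reg_double_star} are uniform in $\varepsilon$, so $u_\varepsilon$ is bounded in $W^{1,q}\cap W^{1,qd/(d-2)}$; extract a weakly convergent subsequence, identify the limit as $u=(\mu+A_q)^{-1}f$ using that $a_\varepsilon\to a$ a.e.\ and boundedly (so the divergence-form operators converge in the appropriate resolvent sense — here uniform ellipticity plus $L^\infty$ convergence of $a_\varepsilon$ is enough), and the bounds pass to the limit by weak lower semicontinuity of the norms. The main obstacle, and the only genuinely delicate point, is the bookkeeping in the quadratic-form estimate: one must expand $\langle a_\varepsilon\nabla(\nabla u_\varepsilon),\nabla(|\nabla u_\varepsilon|^{q-2}\nabla u_\varepsilon)\rangle$ carefully, keeping track of which terms are sign-definite (the ``good'' $|\nabla w_\varepsilon|^2$ and the De Giorgi term $|\nabla u_\varepsilon|^{q-2}|\nabla^2 u_\varepsilon|^2$ with their precise constants), and then arrange the Cauchy–Schwarz splittings so that the form-bounds $\delta^i$ enter with the sharp factor $q\sqrt{\delta_{\mathsf{f}}}$ rather than something larger — it is this optimization that yields the two-regime threshold on $c$ stated in the theorem. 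Everything else (mollification estimates, the $q=1$ and $q=\infty$ endpoint realizations, weak-limit identification) is routine.
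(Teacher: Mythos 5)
Your proposal follows essentially the same route as the paper: regularize the coefficient matrix, obtain the resolvent $u_\varepsilon=(\mu+A_q^\varepsilon)^{-1}f$, test the differentiated equation against $\nabla u_\varepsilon\,|\nabla u_\varepsilon|^{q-2}$ (equivalently, multiply by $-\nabla\cdot(w|w|^{q-2})$), control the commutator terms generated by $\nabla a=c[(\nabla\mathsf{f})\mathsf{f}+\mathsf{f}(\nabla\mathsf{f})]$ using the form-boundedness of $\nabla_i\mathsf{f}$, apply Sobolev to $|w|^{q/2}$, and pass to the limit. The minor variations (pure mollification vs.\ mollification of cutoffs; weak compactness vs.\ invoking the strong resolvent convergence of \cite[Theorem 3.5]{KiS}) are inessential, and you correctly identify the constant-chasing through the two regimes $q\sqrt{\delta_{\mathsf f}}\lessgtr 4$ as the only delicate step.
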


\begin{remarks}
1.~$\delta_{\mathsf{f}}$ effectively estimates from above the ``size'' of the discontinuities of $a$.

2.~For the matrix \eqref{gs}, the constraints on $c$ in Theorem \ref{thm:reg0} (and in other results below) can be substantially relaxed, see \cite{KiS3}.
\end{remarks}

\begin{theorem}[$-\nabla \cdot a \cdot \nabla + b \cdot \nabla$]
\label{thm:reg}
Let $d \geq 3$.
Let $a=I+c\,\mathsf{f} \otimes \mathsf{f}$ be given by \eqref{a}. Let $b \in \mathbf F_{\delta}$. 

\smallskip

{\rm(\textit{i})}  If $\delta_1:=[1 \vee (1+c)^{-2}]\,\delta < 4$, then $-\nabla \cdot a \cdot \nabla + b \cdot \nabla$ has an operator realization  $\Lambda_q(a,b)$ in $L^q$ for all $q \in \big[\frac{2}{2-\sqrt{\delta_1}},\infty[$
as the (minus) generator of a  positivity preserving $L^\infty$ contraction $C_0$ semigroup.

{\rm(\textit{ii})} Assume that \eqref{C} holds, $\nabla a \in \mathbf{F}_{\delta_a}$, $\delta< 1 \wedge \big(\frac{2}{d-2}\big)^2$, $\delta_a$, $\delta_{\mathsf{f}}$, $c$ and $q \geq 2 \vee (d-2)$ satisfy the constraints: 
\begin{align*}
0<c<(q-1-Q)\left\{
\begin{array}{ll}
 \bigl[(q-1)\frac{q\sqrt{\delta_{\mathsf{f}}}}{2} + \frac{q^2(\sqrt{\delta_{\mathsf{f}}}+ \sqrt{\delta})^2}{16}+ (q-2)\frac{q^2\delta_{\mathsf{f}}}{16}\bigr]^{-1}  &   \text{ if\;\;  $1-\frac{q\sqrt{\delta_{\mathsf{f}}}}{4}-\frac{q\sqrt{\delta}}{4} \geq 0$}, \\[4mm]
 \bigl(\frac{q^2\sqrt{\delta_{\mathsf{f}}}}{2}  +(q-2)\frac{q^2\delta_{\mathsf{f}}}{16} + \frac{q\sqrt{\delta}}{2}-1\bigr)^{-1}   & \text{ if\;\; $0 \leq 1-\frac{q\sqrt{\delta_{\mathsf{f}}}}{4} < \frac{q\sqrt{\delta}}{4}$}, \\[4mm]
\bigl[(q-1)\bigl(q\sqrt{\delta_{\mathsf{f}}}  -1\bigr) + \frac{q\sqrt{\delta}}{2}\bigr]^{-1} & \text{ if\;\;  $1-\frac{q\sqrt{\delta_{\mathsf{f}}}}{4} < 0$},
\end{array}
\right.
\end{align*}
where $Q:=\frac{q\sqrt{\delta}}{2}\bigl[q-2+\big(\sqrt{\delta_a}+\sqrt{\delta}\big)\frac{q}{2}\bigr]$, or
$$
-\bigl(q-1 - Q\bigr)\biggl[ (q-1)\bigl(1+q\sqrt{\delta_{\mathsf{f}}}) + \frac{q\sqrt{\delta}}{2}\biggr]^{-1}<c<0.
$$
Then there exist constants 
$\mu_0=\mu_0(d,q,c,\delta,\delta_a,\delta_{\mathsf{f}})>0$ and $K_l=K_l(d,q,c,\delta,\delta_a,\delta_{\mathsf{f}})$, $l=1,2$, such that
 \eqref{reg_double_star} hold for $u:=(\mu+\Lambda_q(a,b))^{-1}f$, $\mu>\mu_0$, $f \in L^q$.

\end{theorem}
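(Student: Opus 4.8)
The plan is the classical regularize--estimate--pass-to-the-limit scheme, extending to a variable matrix the method used in \cite{KS} for $-\Delta+b\cdot\nabla$. For $\varepsilon>0$ let $\mathsf f_\varepsilon$ be a Friedrichs mollification of $\mathsf f$ truncated so that $\|\mathsf f_\varepsilon\|_\infty\le 1$, put $a_\varepsilon:=I+c\,\mathsf f_\varepsilon\otimes\mathsf f_\varepsilon$, and let $b_\varepsilon$ be a mollification of $b\,\mathbf 1_{\{|b|\le\varepsilon^{-1}\}}$. Form-boundedness being stable under truncation and mollification, $\nabla_i\mathsf f_\varepsilon\in\mathbf F_{\delta^i}$, $\nabla a_\varepsilon\in\mathbf F_{\delta_a}$, $b_\varepsilon\in\mathbf F_{\delta}$ with the same constants and a common $\lambda$, while $1\wedge(1+c)\le a_\varepsilon\le 1\vee(1+c)$ and all coefficients are smooth and bounded. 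Hence $L_\varepsilon:=-\nabla\cdot a_\varepsilon\cdot\nabla+b_\varepsilon\cdot\nabla$ has a classical $L^q$-realization, $u_\varepsilon:=(\mu+L_\varepsilon)^{-1}f\in W^{2,q}_{\loc}$ with enough decay, and all integrations by parts below are legitimate (with the usual density arguments). Everything reduces to $\varepsilon$-uniform a priori bounds.

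For (i) I would test $(\mu+L_\varepsilon)u_\varepsilon=f$ against $u_\varepsilon|u_\varepsilon|^{q-2}$: the principal part yields $(q-1)\int|u_\varepsilon|^{q-2}(a_\varepsilon\nabla u_\varepsilon)\cdot\nabla u_\varepsilon\ge\tfrac{4(q-1)}{q^2}\,\sigma\,\|\nabla|u_\varepsilon|^{q/2}\|_2^2$ with $\sigma=1\wedge(1+c)$, while the drift equals $\tfrac2q\int b_\varepsilon|u_\varepsilon|^{q/2}\cdot\nabla|u_\varepsilon|^{q/2}$ and, by form-boundedness applied to $|u_\varepsilon|^{q/2}$, is dominated by $\tfrac2q\sqrt\delta\,\|\nabla|u_\varepsilon|^{q/2}\|_2^2$ up to a term $\lesssim\lambda\|u_\varepsilon\|_q^q$. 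Since $\delta_1=\delta/\sigma^2<4$ and $q\ge\tfrac2{2-\sqrt{\delta_1}}$ are exactly $\tfrac{2(q-1)}q\sigma>\sqrt\delta$, the good term survives and one obtains $\mu\|u_\varepsilon\|_q^q\le\mu_0\|u_\varepsilon\|_q^q+\|f\|_q\|u_\varepsilon\|_q^{q-1}$, i.e. $L^q$ quasi-dissipativity; $L^\infty$-contractivity follows by comparison with the constant supersolution $\mu^{-1}\|f\|_\infty$ (here $L_\varepsilon 1=0$) and positivity by testing against $(u_\varepsilon)_-$. Hille--Yosida gives the semigroup and a strong-resolvent limit defines $\Lambda_q(a,b)$.

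The substance is (ii). I would test $(\mu+L_\varepsilon)u_\varepsilon=f$ against $-\nabla\cdot(|\nabla u_\varepsilon|^{q-2}\nabla u_\varepsilon)$ and set $\mathcal G:=\int|\nabla u_\varepsilon|^{q-2}|\nabla^2u_\varepsilon|^2$, $\mathcal D:=\|\nabla|\nabla u_\varepsilon|^{q/2}\|_2^2$ (so $\mathcal G\ge\tfrac4{q^2}\mathcal D$). The $\mu$-term gives $\mu\|\nabla u_\varepsilon\|_q^q$; the $-\Delta$-part of the principal term, after two integrations by parts and $|\nabla^2u_\varepsilon|^2\ge|\nabla\,|\nabla u_\varepsilon|\,|^2$, is bounded below by $\mathcal G+\tfrac{4(q-2)}{q^2}\mathcal D$. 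The $c\,\mathsf f_\varepsilon\otimes\mathsf f_\varepsilon$-part, expanded via $\nabla\cdot(\mathsf f_\varepsilon(\mathsf f_\varepsilon\cdot\nabla u_\varepsilon))=(\nabla\cdot\mathsf f_\varepsilon)(\mathsf f_\varepsilon\cdot\nabla u_\varepsilon)+\sum_{i,j}\mathsf f_{\varepsilon,i}(\nabla_i\mathsf f_{\varepsilon,j})\nabla_j u_\varepsilon+\sum_{i,j}\mathsf f_{\varepsilon,i}\mathsf f_{\varepsilon,j}\nabla_i\nabla_j u_\varepsilon$, produces a Hessian--Hessian contribution (treated by a further integration by parts, invoking $\nabla a_\varepsilon\in\mathbf F_{\delta_a}$) and terms of type $\int|\nabla_i\mathsf f_\varepsilon||\nabla u_\varepsilon|^{q-1}|\nabla^2u_\varepsilon|$, $\int|\nabla\mathsf f_\varepsilon|^2|\nabla u_\varepsilon|^q$; the drift satisfies $|\langle b_\varepsilon\cdot\nabla u_\varepsilon,-\nabla\cdot(|\nabla u_\varepsilon|^{q-2}\nabla u_\varepsilon)\rangle|\le C_q\int|b_\varepsilon||\nabla u_\varepsilon|^{q-1}|\nabla^2u_\varepsilon|$. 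Each singular term is handled by the splitting $|\nabla u_\varepsilon|^{q-1}|\nabla^2u_\varepsilon|=(|b_\varepsilon|\,|\nabla u_\varepsilon|^{q/2})(|\nabla u_\varepsilon|^{q/2-1}|\nabla^2u_\varepsilon|)$ (likewise with $|\nabla_i\mathsf f_\varepsilon|$), the form bound $\||b_\varepsilon|\phi\|_2\le\sqrt\delta\|\nabla\phi\|_2+\sqrt{\lambda\delta}\|\phi\|_2$ with $\phi=|\nabla u_\varepsilon|^{q/2}$, Cauchy--Schwarz against $\mathcal G^{1/2}$, and Young; the net outcome is $\theta_1\mathcal G+\theta_2\mathcal D+(\text{terms}\lesssim\mu_0\|\nabla u_\varepsilon\|_q^q)$, where $\theta_1,\theta_2$ are precisely the combinations of $c,\sqrt\delta,\sqrt{\delta_a},\sqrt{\delta_{\mathsf f}}$ occurring in the statement, and the displayed constraints on $c$ (the two sign regimes and the case split in $q\sqrt{\delta_{\mathsf f}}$, $q\sqrt\delta$) are, after optimizing the Young parameters, exactly $1-\theta_1>0$ and $\tfrac{4(q-2)}{q^2}-\theta_2\ge0$. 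The right-hand side is bounded by $|\langle f,-\nabla\cdot(|\nabla u_\varepsilon|^{q-2}\nabla u_\varepsilon)\rangle|\le C_q\|f\|_q\|\nabla u_\varepsilon\|_q^{(q-2)/2}\mathcal G^{1/2}$ (Hölder with exponents $\tfrac q2,\tfrac q{q-2}$, then Cauchy--Schwarz) and Young-absorbed into $\mathcal G$. Collecting, $(\mu-\mu_0)\|\nabla u_\varepsilon\|_q^q+c_0\mathcal G\le C\|f\|_q^2\|\nabla u_\varepsilon\|_q^{q-2}$ with $c_0>0$: dividing by $\|\nabla u_\varepsilon\|_q^{q-2}$ gives the first line of \eqref{reg_double_star}, while $\mathcal D\le\tfrac{q^2}4\mathcal G\le C'(\mu-\mu_0)^{-(q-2)/2}\|f\|_q^q$ together with the Sobolev embedding $\|\,|\nabla u_\varepsilon|^{q/2}\|_{\frac{2d}{d-2}}\le c_S\|\nabla|\nabla u_\varepsilon|^{q/2}\|_2$ gives the second. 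Finally $\varepsilon\downarrow0$: the uniform bounds give weakly convergent subsequences, strong-resolvent convergence $L_\varepsilon\to\Lambda_q(a,b)$ identifies the limit as $u=(\mu+\Lambda_q(a,b))^{-1}f$, and weak lower semicontinuity of the norms transfers \eqref{reg_double_star} to $u$.

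The hard part will be the constant bookkeeping in (ii): one must track every coefficient so that $\mathcal G$ retains a strictly positive factor, which means choosing for each singular term the right splitting, the right Young parameter, and whether to absorb into $\mathcal G$ or into $\mathcal D$, and then verifying that $1-\theta_1>0$ and $\tfrac{4(q-2)}{q^2}-\theta_2\ge0$ reduce exactly to the displayed inequalities for $c$. The hypothesis $\delta<1\wedge\big(\tfrac2{d-2}\big)^2$ (with $d\ge3$) is what makes this admissible range of $c$ nonempty for some $q\ge2\vee(d-2)$, hence the statement non-vacuous and, by the embedding $W^{1,qd/(d-2)}\hookrightarrow C^{0,\gamma}$, $\gamma=1-\tfrac{d-2}q$, also productive of Hölder regularity. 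A minor technical nuisance is justifying the second-order computations for the merely $W^{2,q}_{\loc}$ approximants, which is routine by mollification.
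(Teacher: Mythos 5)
Your overall scheme—regularize the coefficients, test $(\mu+L_\varepsilon)u_\varepsilon=f$ against $\phi=-\nabla\cdot(|\nabla u_\varepsilon|^{q-2}\nabla u_\varepsilon)$, extract a weighted-gradient estimate, apply Sobolev embedding, then pass to the limit—is indeed the scheme of the paper, and part (i) via an $L^q$ dissipativity computation with the form-bound relative to $A(a)$ is also in line with the route taken (via \cite[Theorem 3.2]{KiS}). But two concrete ingredients of the paper's proof of (ii) are missing from your sketch, and without them the bookkeeping you defer to "the hard part" would not reproduce the stated constraints on $c$.

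First, the proof does not run on $\mathcal G=\sum_r\langle(\nabla_r w)^2|w|^{q-2}\rangle$ and $\mathcal D=\|\nabla|w|^{q/2}\|_2^2$ alone. The crucial bookkeeping device is the pair of $\mathsf f$-directional quantities $\bar I_q:=\langle(\mathsf f\cdot\nabla w)^2|w|^{q-2}\rangle$ and $\bar J_q:=\langle(\mathsf f\cdot\nabla|w|)^2|w|^{q-2}\rangle$. Writing $\langle A_q u,\phi\rangle=\langle A_q w,w|w|^{q-2}\rangle+\langle[\nabla,A_q]_-u,w|w|^{q-2}\rangle$ (rather than expanding $-\nabla\cdot(\mathsf f(\mathsf f\cdot\nabla u))$ and pairing, as you propose) yields the principal contribution $I_q+c\bar I_q+(q-2)(J_q+c\bar J_q)$ on the left, plus commutator terms $S_1,S_2$ on the right that again land on $\bar I_q,\bar J_q$ (and $I_q,J_q$) after Young. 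The sign of $c$ and the thresholds $1-\frac{q\sqrt{\delta_{\mathsf f}}}{4}\gtrless0$, $1-\frac{q\sqrt{\delta_{\mathsf f}}}{4}-\frac{q\sqrt\delta}{4}\gtrless0$ in the statement arise precisely from whether $c\bar I_q,c\bar J_q$ are kept as good terms on the left (Young parameters $\gamma=\gamma_1=\frac14$ making the $\bar I_q,\bar J_q$ coefficients vanish) or are converted into $I_q,J_q$ via $\bar I_q\le I_q$, $\bar J_q\le J_q$ (Young parameters $\gamma=\gamma_1=\frac1{q\sqrt{\delta_{\mathsf f}}}$). If you only track $\mathcal G$ and $\mathcal D$ you lose this mechanism, and for $c>0$ the positive $c\bar I_q$ on the left is exactly what allows the displayed range of $c$; without it you cannot close the estimate at the stated thresholds. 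Incidentally, the hypothesis $\nabla a\in\mathbf F_{\delta_a}$ is not used to treat the Hessian-Hessian part; that part is controlled by $\bar I_q,\bar J_q$. $\nabla a\in\mathbf F_{\delta_a}$ enters the drift estimate, through the quantity $P_q=\langle|\nabla a\cdot w|^2|w|^{q-2}\rangle$.

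Second, your bound on the drift term, $|\langle b\cdot w,\phi\rangle|\le C_q\int|b|\,|w|^{q-1}|\nabla^2u|$, would, after Cauchy--Schwarz, introduce $I_q^{1/2}B_q^{1/2}$ and hence, via $|\Delta u|^2\le d\sum_r|\nabla_r w|^2$, a dimension factor $\sqrt d$ that cannot be absorbed into $\varepsilon I_q$ with $\varepsilon$ small (unlike the $\langle f,\phi\rangle$ term, here the $J_q$ budget is fixed). The paper avoids this by using the equation: write $-\Delta u=\nabla\cdot(a-1)\cdot w-\mu u-b\cdot w+f$ inside $F_1=\langle-\Delta u,|w|^{q-2}(-b\cdot w)\rangle$, which produces $P_q^{1/2}B_q^{1/2}$, $\bar I_q^{1/2}B_q^{1/2}$, $B_q$, and harmless lower-order terms, all controlled by $\delta$, $\delta_a$, $\delta_{\mathsf f}$ with no dimensional loss; this is Lemma \ref{b_est_lem}. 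That algebraic substitution, together with the $\bar I_q,\bar J_q$ accounting, is where the specific expression for $Q$ and the three displayed inequalities for $c$ come from. As stated, your sketch would not recover them.
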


\begin{remarks}
1.~Taking $c=0$ (then $\delta_a=0$), we recover in Theorem \ref{thm:reg}(\textit{ii}) the result of \cite[Lemma 5]{KS}: $\delta< 1 \wedge \big(\frac{2}{d-2}\big)^2$.

2.~Theorem \ref{thm:reg}(\textit{i}) is an immediate consequence of the following general result. 
Let $a$ be an  $\mathcal L^d$ measurable uniformly elliptic matrix on $\mathbb R^d$. 
Set $A \equiv A_2:= [-\nabla \cdot a \cdot \nabla\upharpoonright C_c^\infty]^{\rm clos}_{2 \to 2}$.
A vector field $b:\mathbb R^d \rightarrow \mathbb R^d$ belongs to $\mathbf F_{\delta_1}(A)$, $\delta_1>0$, the class of form-bounded vector fields (with respect to $A$), if $b_a^2:=b \cdot a^{-1} \cdot b
 \in L^1_\loc$ and there exists a constant $\lambda=\lambda_{\delta_1}>0$ such that 
$$\|b_a(\lambda + A)^{-\frac{1}{2}}\|_{2\rightarrow 2}\leq \sqrt{\delta_1}.$$
If $b \in \mathbf{F}_{\delta_1}(A)$, $\delta_1<4$, then $-\nabla \cdot a \cdot \nabla + b \cdot \nabla$ has an operator realization  $\Lambda_q(a,b)$ in $L^q$ for all $q \in \big[\frac{2}{2-\sqrt{\delta_1}},\infty[$
as the (minus) generator of a  positivity preserving $L^\infty$ contraction $C_0$ semigroup, see \cite[Theorem 3.2]{KiS}.  
\end{remarks}

\begin{corollary}[$-a \cdot \nabla^2 + b \cdot \nabla$]
\label{thm:reg2}
Let $d \geq 3$.
Let $a=I+c\,\mathsf{f} \otimes \mathsf{f}$ be given by \eqref{a}. Let $b \in \mathbf F_{\delta}$, $\nabla a \in \mathbf{F}_{\delta_a}$. Then $\nabla a + b \in \mathbf{F}_{\delta_2}$, $\sqrt{\delta_2}:=\sqrt{\delta_a} + \sqrt{\delta}$.

\smallskip

{\rm(\textit{i})}  If $\delta_1:=[1 \vee (1+c)^{-2}]\,\delta_2<4$, then $-a \cdot \nabla^2 + b \cdot \nabla$ has an operator realization  $\Lambda_q(a,\nabla a + b)$ in $L^q$ for all $q \in \big[\frac{2}{2-\sqrt{\delta_1}},\infty[$
as the (minus) generator of a  positivity preserving $L^\infty$ contraction $C_0$ semigroup.

{\rm(\textit{ii})} Assume that \eqref{C} holds, and $\delta_2< 1 \wedge \big(\frac{2}{d-2}\big)^2$, $\delta_a$, $\delta_{\mathsf{f}}$, $c$, $q \geq 2 \vee (d-2)$ satisfy the constraints:
\begin{align*}
0<c<(q-1-Q)\left\{
\begin{array}{ll}
 \bigl[(q-1)\frac{q\sqrt{\delta_{\mathsf{f}}}}{2} + \frac{q^2(\sqrt{\delta_{\mathsf{f}}}+ \sqrt{\delta_2})^2}{16}+ (q-2)\frac{q^2\delta_{\mathsf{f}}}{16}\bigr]^{-1}  &   \text{ if\;\;  $1-\frac{q\sqrt{\delta_{\mathsf{f}}}}{4}-\frac{q\sqrt{\delta_2}}{4} \geq 0$}, \\[4mm]
 \bigl(\frac{q^2\sqrt{\delta_{\mathsf{f}}}}{2}  +(q-2)\frac{q^2\delta_{\mathsf{f}}}{16} + \frac{q\sqrt{\delta_2}}{2}-1\bigr)^{-1}   & \text{ if\;\; $0 \leq 1-\frac{q\sqrt{\delta_{\mathsf{f}}}}{4} < \frac{q\sqrt{\delta_2}}{4}$}, \\[4mm]
\bigl[(q-1)\bigl(q\sqrt{\delta_{\mathsf{f}}}  -1\bigr) + \frac{q\sqrt{\delta_2}}{2}\bigr]^{-1} & \text{ if\;\;  $1-\frac{q\sqrt{\delta_{\mathsf{f}}}}{4} < 0$},
\end{array}
\right.
\end{align*}
where $Q:=\frac{q\sqrt{\delta_2}}{2}\bigl[q-2+\big(\sqrt{\delta_a}+\sqrt{\delta_2}\big)\frac{q}{2}\bigr]$, or
$$
-\bigl(q-1 - Q\bigr)\biggl[ (q-1)\bigl(1+q\sqrt{\delta_{\mathsf{f}}}) + \frac{q\sqrt{\delta_2}}{2}\biggr]^{-1}<c<0.
$$
Then there exist constants 
$\mu_0=\mu_0(d,q,c,\delta_2,\delta_a,\delta_{\mathsf{f}})>0$ and $K_l=K_l(d,q,c,\delta_2,\delta_a,\delta_{\mathsf{f}})$, $l=1,2$, such that the estimates \eqref{reg_double_star} hold for $u=(\mu+\Lambda_q(a,\nabla a + b))^{-1}f$, $\mu>\mu_0$, $f \in L^q$.
\end{corollary}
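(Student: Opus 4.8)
The plan is to deduce Corollary \ref{thm:reg2} from Theorem \ref{thm:reg} by passing from the non-divergence to the divergence form. As recorded in the text,
\[
-a\cdot\nabla^2 + b\cdot\nabla = -\nabla\cdot a\cdot\nabla + (\nabla a + b)\cdot\nabla, \qquad (\nabla a)_k=\sum_{i=1}^d\nabla_i a_{ik},
\]
so the operator under study is exactly the divergence-form operator of Theorem \ref{thm:reg} with the drift $b$ replaced by the modified drift $\nabla a + b$, and its $L^q$ realization is by definition the operator $\Lambda_q(a,\nabla a + b)$ produced by that theorem. (The displayed identity holds in the sense of distributions for any $a\in W^{1,1}_{\loc}$, and $\mathsf f\in[L^\infty\cap W^{1,2}_{\loc}]^d$ gives $a=I+c\,\mathsf f\otimes\mathsf f\in W^{1,2}_{\loc}$; this is what justifies the naming.)

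First I would prove the opening assertion $\nabla a + b\in\mathbf F_{\delta_2}$ with $\sqrt{\delta_2}=\sqrt{\delta_a}+\sqrt{\delta}$. Let $\lambda$ be no smaller than the two constants $\lambda_{\delta_a},\lambda_{\delta}$ furnished by $\nabla a\in\mathbf F_{\delta_a}$ and $b\in\mathbf F_{\delta}$. Then, from $|\nabla a + b|\le|\nabla a|+|b|$ pointwise and the triangle inequality in the operator norm $\|\cdot\|_{2\to 2}$,
\[
\bigl\||\nabla a + b|(\lambda-\Delta)^{-\frac12}\bigr\|_{2\to 2}\le\bigl\||\nabla a|(\lambda-\Delta)^{-\frac12}\bigr\|_{2\to 2}+\bigl\||b|(\lambda-\Delta)^{-\frac12}\bigr\|_{2\to 2}\le\sqrt{\delta_a}+\sqrt{\delta},
\]
which is the claim. (When \eqref{C} holds one moreover has $\delta_a\le|c|^2(\sqrt d+1)^2\delta_{\mathsf f}$, via the pointwise bound $|\nabla a|\le|c|(\sqrt d+1)\bigl(\sum_{i=1}^d|\nabla_i\mathsf f|^2\bigr)^{1/2}$ coming from $\nabla a=c[({\rm div}\,\mathsf f)\mathsf f+\mathsf f\cdot\nabla\mathsf f]$ and $\|\mathsf f\|_\infty=1$, together with $\sum_{i=1}^d\bigl\||\nabla_i\mathsf f|(\lambda-\Delta)^{-1/2}\bigr\|_{2\to 2}^2\le\delta_{\mathsf f}$ for $\lambda=\max_i\lambda_{\delta^i}$.)

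For part (\textit{i}) I apply Theorem \ref{thm:reg}(\textit{i}) with $b$ replaced by $\nabla a + b\in\mathbf F_{\delta_2}$: its hypothesis is precisely $\delta_1:=[1\vee(1+c)^{-2}]\,\delta_2<4$, the standing assumption of Corollary \ref{thm:reg2}(\textit{i}), and its conclusion is the asserted realization $\Lambda_q(a,\nabla a + b)$ for $q\in[\tfrac{2}{2-\sqrt{\delta_1}},\infty[$; equivalently one invokes Remark 2 following Theorem \ref{thm:reg}, since $a\ge[1\wedge(1+c)]I$ turns $\nabla a + b\in\mathbf F_{\delta_2}(-\Delta)$ into $\nabla a + b\in\mathbf F_{\delta_1}(A)$ and then \cite[Theorem 3.2]{KiS} applies. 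For part (\textit{ii}) I apply Theorem \ref{thm:reg}(\textit{ii}) with $b$ replaced by $\nabla a + b$ and $\delta$ replaced by $\delta_2$, keeping $a$ — hence $\mathsf f$, $\delta_{\mathsf f}$ and $\delta_a$ — unchanged. Under this substitution the smallness hypothesis $\delta<1\wedge(\tfrac{2}{d-2})^2$ becomes $\delta_2<1\wedge(\tfrac{2}{d-2})^2$, the quantity $Q=\tfrac{q\sqrt{\delta}}{2}\bigl[q-2+(\sqrt{\delta_a}+\sqrt{\delta})\tfrac q2\bigr]$ becomes $Q=\tfrac{q\sqrt{\delta_2}}{2}\bigl[q-2+(\sqrt{\delta_a}+\sqrt{\delta_2})\tfrac q2\bigr]$, and the three-case admissibility constraint on $c$ together with its negative branch turns, termwise, into exactly the displayed constraints of Corollary \ref{thm:reg2}(\textit{ii}). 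Theorem \ref{thm:reg}(\textit{ii}) then supplies $\mu_0$ and $K_1,K_2$ and the estimates \eqref{reg_double_star} for $u=(\mu+\Lambda_q(a,\nabla a+b))^{-1}f$, $\mu>\mu_0$, $f\in L^q$.

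There is no genuine analytic obstacle: all the substance — the quadratic-form and iteration estimates behind \eqref{reg_double_star} — is already carried by Theorem \ref{thm:reg}, and the present argument is essentially bookkeeping. The two points deserving a moment's attention are the triangle inequality giving $\sqrt{\delta_2}=\sqrt{\delta_a}+\sqrt{\delta}$ (so that a \emph{single} form bound governs the whole modified drift and the admissible range $[\tfrac{2}{2-\sqrt{\delta_1}},\infty[$ comes out unchanged), and the fact that $\delta_a$ must still be permitted to enter the constraint on $c$ \emph{separately}, through $Q$ and through the terms involving $\nabla a$ in the proof of Theorem \ref{thm:reg}(\textit{ii}), rather than being absorbed into $\delta_2$. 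Since that proof already treats the $\nabla a\cdot\nabla$ contribution coming from the matrix and a generic form-bounded drift on different footings, nothing beyond the parameter substitution above is needed.
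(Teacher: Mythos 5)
Your proposal is correct and follows exactly the route the paper intends: rewrite $-a\cdot\nabla^2+b\cdot\nabla=-\nabla\cdot a\cdot\nabla+(\nabla a+b)\cdot\nabla$, observe $\nabla a+b\in\mathbf F_{\delta_2}$ with $\sqrt{\delta_2}=\sqrt{\delta_a}+\sqrt{\delta}$ by the triangle inequality for $\|\cdot\|_{2\to2}$, and then invoke Theorem \ref{thm:reg} with the substituted drift, i.e.\ with $\delta\mapsto\delta_2$ while $\delta_a$ and $\delta_{\mathsf f}$ are kept as separate parameters (since the $\nabla a\cdot\nabla$ contribution is still handled through $P_q$ inside the proof of Theorem \ref{thm:reg}(\textit{ii})). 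Your remark that $\delta_a$ cannot be absorbed into $\delta_2$ is the one nontrivial bookkeeping point, and you identified it correctly.
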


Set $b_n := e^{\epsilon_n\Delta} (\mathbf 1_n b)$,  $\epsilon_n \downarrow 0$, $n \geq 1$, where $\mathbf 1_n$ is the indicator of  $\{x \in \mathbb R^d \mid  \; |x| \leq n,  |b(x)| \leq n \}$.
Also, set $\mathsf{f}_n:=(\mathsf{f}_{n}^i)_{i=1}^d$, 
$
\mathsf{f}_{n}^i:=e^{\epsilon_n\Delta}(\eta_n\mathsf{f}^i),
$ 
$\epsilon_n \downarrow 0$, $n \geq 1$, where 
$$
\eta_n(x):=\left\{
\begin{array}{ll}
1, & \text{ if } |x|< n, \\
n+1-|x|, & \text{ if } n \leq |x| \leq n+1, \qquad (x \in \mathbb R^d)\\
0, & \text{ if } |x|>n+1.
\end{array}
\right.
$$

\begin{theorem}[$-\nabla \cdot a \cdot \nabla + b \cdot \nabla$]
\label{thm:feller}
{\rm(\textit{i})} In the assumptions of Theorem \ref{thm:reg}(\textit{ii}), the formal differential operator $-\nabla \cdot a \cdot \nabla + b\cdot \nabla$ has an operator realization $-\Lambda_{C_\infty}(a,b)$ as the generator of a positivity preserving contraction $C_0$ semigroup in $C_\infty$ defined by
\[
e^{-t\Lambda_{C_\infty}(a,b)}:=s \mbox{-} C_\infty \mbox{-} \lim_n e^{-t \Lambda_{C_\infty}(a_n,b_n)} \quad (\text{loc.\;uniformly in }t \geq 0),
\] 
where $a_n:=I+c\,\mathsf{f}_n \otimes \mathsf{f}_n \subset [C^\infty]^{d \times d}$,
$\Lambda_{C_\infty}(a_n,b_n):=-\nabla \cdot a_n \cdot \nabla +b_n\cdot \nabla$, 
$D(\Lambda_{C_\infty}(a_n,b_n))=(1-\Delta)^{-1}C_\infty$.

\smallskip

{\rm(\textit{ii})}~{\rm[}The $L^r$-strong Feller property {\rm]}~$\bigl( (\mu+\Lambda_{C_\infty}(a,b))^{-1} \upharpoonright L^r \cap C_\infty\bigr)^{\clos}_{L^r \rightarrow C_\infty} \in \mathcal B(L^r,  C^{0,1- \frac{d}{rj}})$ for some $r>d-2$ and all $\mu > \mu_0$. 

\smallskip

{\rm(\textit{iii})}~The
integral kernel of $e^{-t\Lambda_{C_\infty}(a,b)}$ determines the
transition probability function of a Feller process.
\end{theorem}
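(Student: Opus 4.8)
The plan is to prove Theorem \ref{thm:feller} in three stages, building from the a priori estimates already obtained in Theorem \ref{thm:reg}(\textit{ii}). First, for the smooth regularized coefficients $a_n=I+c\,\mathsf{f}_n\otimes\mathsf{f}_n$ and $b_n$ one checks that $\mathbf 1_n b\mapsto b_n$ and $\eta_n\mathsf{f}\mapsto\mathsf{f}_n$ preserve the form-bounds: since $e^{\epsilon\Delta}$ is an $L^2$-contraction commuting with $(\lambda-\Delta)^{-1/2}$, one has $b_n\in\mathbf F_{\delta}$, $\nabla_i\mathsf{f}_n\in\mathbf F_{\delta^i+o(1)}$ uniformly in $n$ (the extra $o(1)$ from the cutoff $\eta_n$ being absorbed for large $n$), so all hypotheses of Theorem \ref{thm:reg}(\textit{ii}) hold for $(a_n,b_n)$ with constants $\mu_0,K_1,K_2$ independent of $n$. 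Then $\Lambda_{C_\infty}(a_n,b_n)$ with domain $(1-\Delta)^{-1}C_\infty$ generates a positivity preserving, $L^\infty$-contraction $C_0$ semigroup on $C_\infty$ by classical theory (the coefficients are smooth and bounded, so one has the usual elliptic/Feller construction), and the resolvents $R_n^\mu:=(\mu+\Lambda_{C_\infty}(a_n,b_n))^{-1}$ satisfy, on $L^q\cap C_\infty$, the bounds \eqref{reg_double_star} for $u=R_n^\mu f$.

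The second and central stage is the $L^p\to L^\infty$ (in fact $L^p\to C^{0,\gamma}$) iteration. Fix $r>d-2$ as in the statement and $f\in L^r\cap C_\infty$, $u_n=R_n^\mu f$. Starting from $q_0=r$, the estimate $\|\nabla u_n\|_{q_0 d/(d-2)}\le K_2(\mu-\mu_0)^{1/q_0-1/2}\|f\|_{q_0}$ together with Sobolev embedding $W^{1,q_0d/(d-2)}\hookrightarrow L^{q_1}$ with $q_1=\frac{q_0 d/(d-2)\cdot d}{d-q_0 d/(d-2)}$ (valid once $q_0 d/(d-2)<d$, i.e.\ in the regime one iterates from) raises the integrability exponent; iterating $q_0<q_1<q_2<\cdots$ one reaches, after finitely many steps (the exponents grow geometrically), some $q_j$ with $q_j>d$, hence $W^{1,q_j d/(d-2)}\hookrightarrow C^{0,1-\frac{d-2}{q_j d/(d-2)}}=C^{0,1-\frac{d}{rj}}$ after bookkeeping the exponent (the index $j$ in $1-\frac{d}{rj}$ is exactly the number of iteration steps, which is what forces the appearance of $j$ in part (\textit{ii})). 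Chasing the constants through each step — each multiplying by a further power of $(\mu-\mu_0)^{\pm}$ and a dimensional constant — yields a bound $\|u_n\|_{C^{0,\gamma}}\le C(\mu)\|f\|_r$ uniform in $n$, with $\gamma=1-\frac d{rj}$. This is the $L^r$-strong Feller estimate for the approximants.

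The third stage is passage to the limit. The uniform $C^{0,\gamma}$ bound plus a uniform $L^\infty$ bound (from the $L^\infty$-contraction property on bounded $f$, then a density/truncation argument) gives, via Arzelà–Ascoli on compacta together with the decay at infinity inherited from $C_\infty$, a subsequential limit $u=\lim_n u_n$ in $C_\infty$; one then identifies $u$ with $(\mu+\Lambda_{C_\infty}(a,b))^{-1}f$ by testing against $C_c^\infty$ and using $a_n\to a$, $b_n\to b$ in the appropriate local senses together with the $W^{1,q}$-convergence $\nabla u_n\rightharpoonup\nabla u$ that the first bound in \eqref{reg_double_star} supplies — this shows the limit is independent of the subsequence, so $s$-$C_\infty$-$\lim_n e^{-t\Lambda_{C_\infty}(a_n,b_n)}$ exists (loc.\ uniformly in $t$) by the Trotter–Kato theorem and defines a positivity preserving contraction $C_0$ semigroup, giving (\textit{i}); the uniform estimate descends to the limit, giving (\textit{ii}); and since the semigroup is $L^\infty$-contractive, positivity preserving, conservative-enough and maps into $C_\infty$, the Riesz–Markov representation of $e^{-t\Lambda_{C_\infty}(a,b)}\mathbf 1_{(\cdot)}$ furnishes a transition probability function of a Feller process, giving (\textit{iii}).

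The main obstacle is the uniformity in $n$ of the whole chain: one must verify that the form-bound constants for $b_n$ and $\nabla_i\mathsf{f}_n$, hence $\mu_0$, $K_1$, $K_2$ and all the constants accumulated through the finitely many Sobolev iterations, do not degenerate as $n\to\infty$ and, crucially, that the constraint on $c$ in Theorem \ref{thm:reg}(\textit{ii}) — which is exactly what makes the a priori estimates available — is stable under the mollification and cutoff. Once this is secured, the De Giorgi–Nash–type continuity needed for Arzelà–Ascoli is automatic from the $W^{1,qd/(d-2)}$ bounds with $q>d$, and the identification of the limiting resolvent is routine weak-convergence bookkeeping; the Feller property in (\textit{iii}) then follows from the standard criterion once (\textit{i}) and (\textit{ii}) are in hand.
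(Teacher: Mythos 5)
Your overall scaffolding — uniform form-bounds for the mollified coefficients, a uniform $W^{1,p}$ estimate from Theorem \ref{thm:reg}(\textit{ii}), convergence of resolvents, and Trotter — matches the paper's, but two of the mechanisms you propose don't correspond to what is actually needed, and one essential ingredient is missing.

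First, the ``bootstrap iteration'' on integrability exponents is both unnecessary and misconceived. The estimate \eqref{reg_double_star} already puts $\nabla u$ in $L^{qj}$ with $j=\frac{d}{d-2}$ fixed; with $q\geq 2\vee(d-2)$ one has $qj>d$ immediately, so $W^{1,qj}\hookrightarrow C^{0,1-\frac{d}{qj}}$ is a single Sobolev embedding — no chain $q_0<q_1<\cdots$ is performed, and the $j$ in the H\"{o}lder exponent $1-\frac{d}{rj}$ is the Sobolev constant $d/(d-2)$, not ``the number of iteration steps.'' Moreover your proposed chain cannot even start: Theorem \ref{thm:reg}(\textit{ii}) gives $\|\nabla u\|_{qj}\lesssim\|f\|_q$ for the fixed datum $f$; raising the integrability of $u$ does not feed back into the hypothesis, and the constraints on $c$ depend on $q$, so larger $q$'s are not automatically admissible. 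The iteration that is genuinely used in the paper (Section \ref{iteration_sect}, Lemmas \ref{lem1}–\ref{lem2}) is a Moser-type iteration applied to the \emph{difference} $g=u_n-u_m$, yielding $\|g\|_\infty\leq B\|g\|_{r_0}^\gamma$; combined with the $s$-$L^{r_0}$ resolvent convergence of \cite[Theorem 3.5]{KiS} this directly shows $\{u_n\}$ is Cauchy in $C_\infty$, with no compactness argument at all.

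Second, the Arzel\`{a}–Ascoli route you substitute has a real gap: equicontinuity on compacta plus $u_n\in C_\infty$ does \emph{not} give a limit in the sup norm over $\mathbb R^d$; one needs a uniform-in-$n$ decay at infinity, which you invoke (``inherited from $C_\infty$'') but do not supply. One way to close it is an interpolation $\|g\|_\infty\leq C\|g\|_{r_0}^\theta[g]_{C^{0,\gamma}}^{1-\theta}$ together with the $L^{r_0}$ convergence — which is essentially a rederivation of the paper's Lemma \ref{lem2} by another means. Third, to conclude via Trotter you also need uniform behaviour of the approximate resolvents as $\mu\uparrow\infty$ (so that the limit is a \emph{$C_0$} semigroup); this is the paper's Lemma \ref{lem5}, $s$-$C_\infty$-$\lim_{\mu\uparrow\infty}\mu(\mu+\Lambda_{C_\infty}(a_n,b_n))^{-1}=1$ uniformly in $n$, and it is absent from your argument. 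Once these three points are fixed, your plan aligns with the paper's.
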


\begin{corollary}[$-a \cdot \nabla^2 + b \cdot \nabla$]
\label{thm:feller2}
{\rm(\textit{i})} In the assumptions of Corollary \ref{thm:reg2}(\textit{ii}), the formal differential operator $-a \cdot \nabla^2 + b\cdot \nabla$ has an operator realization $-\Lambda_{C_\infty}(a,\nabla a + b)$ as the generator of a positivity preserving contraction $C_0$ semigroup in $C_\infty$ defined by
\[
e^{-t\Lambda_{C_\infty}(a,\nabla a + b)}:=s \mbox{-} C_\infty \mbox{-} \lim_n e^{-t \Lambda_{C_\infty}(a_n,\nabla a_n + b_n)} \quad (\text{loc.\;uniformly in }t \geq 0),
\] 
where $a_n=I+c\,\mathsf{f}_n \otimes \mathsf{f}_n \subset [C^\infty]^{d \times d}$, 
$\Lambda_{C_\infty}(a_n,\nabla a_n+b_n):=-a_n \cdot \nabla^2 +b_n\cdot \nabla$, 
$D(\Lambda_{C_\infty}(a_n,\nabla a_n+ b_n))=(1-\Delta)^{-1}C_\infty$.

\smallskip

{\rm(\textit{ii})}~{\rm[}The $L^r$-strong Feller property {\rm]}~$\bigl( (\mu+\Lambda_{C_\infty}(a,\nabla a + b))^{-1} \upharpoonright L^r \cap C_\infty\bigr)^{\clos}_{L^r \rightarrow C_\infty} \in \mathcal B(L^r,  C^{0,1- \frac{d}{rj}})$ for some $r>d-2$ and all $\mu > \mu_0$. 

\smallskip

{\rm(\textit{iii})}~The
integral kernel of $e^{-t\Lambda_{C_\infty}(a,\nabla a + b)}$ determines the
transition probability function of a Feller process.
\end{corollary}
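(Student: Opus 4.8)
The plan is to deduce Corollary~\ref{thm:feller2} from Theorem~\ref{thm:feller} by the substitution $b \rightsquigarrow \nabla a + b$, in the same manner in which Corollary~\ref{thm:reg2} was obtained from Theorem~\ref{thm:reg}. Recall the identity $-a\cdot\nabla^2 + b\cdot\nabla = -\nabla\cdot a\cdot\nabla + (\nabla a + b)\cdot\nabla$, with $\nabla a = c[(\mathrm{div}\,\mathsf f)\mathsf f + \mathsf f\cdot\nabla\mathsf f] \in \mathbf F_{\delta_a}$ and hence $\nabla a + b \in \mathbf F_{\delta_2}$, $\sqrt{\delta_2} = \sqrt{\delta_a} + \sqrt\delta$, as already recorded in the statement of Corollary~\ref{thm:reg2}. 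The constraints imposed in Corollary~\ref{thm:reg2}(\textit{ii}) are exactly those of Theorem~\ref{thm:reg}(\textit{ii}) with $\delta$ replaced by $\delta_2$ (the role of $\delta_a$ being unchanged); therefore the conclusions of Theorem~\ref{thm:reg}(\textit{ii}), and with them the hypotheses and conclusions of Theorem~\ref{thm:feller}, apply to $\Lambda_q(a,\nabla a + b)$.

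First I would verify that the approximation scheme of Theorem~\ref{thm:feller} is compatible with the non-divergence setting. The natural smooth approximants of $-a\cdot\nabla^2 + b\cdot\nabla$ are $-a_n\cdot\nabla^2 + b_n\cdot\nabla = -\nabla\cdot a_n\cdot\nabla + (\nabla a_n + b_n)\cdot\nabla$ with $a_n = I + c\,\mathsf f_n\otimes\mathsf f_n$ and $\nabla a_n = c[(\mathrm{div}\,\mathsf f_n)\mathsf f_n + \mathsf f_n\cdot\nabla\mathsf f_n]$, so the effective divergence-form drift of the $n$-th approximant is $\nabla a_n + b_n$, not the mollification $e^{\epsilon_n\Delta}(\mathbf 1_n(\nabla a + b))$. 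One has to check that $\nabla a_n + b_n \to \nabla a + b$ in $L^2_{\loc}$ together with uniform form-boundedness: since $\nabla_i\mathsf f_n = e^{\epsilon_n\Delta}\bigl((\nabla_i\eta_n)\mathsf f + \eta_n\nabla_i\mathsf f\bigr)$ one gets $\nabla_i\mathsf f_n \to \nabla_i\mathsf f$ in $L^2_{\loc}$ and, by the standard argument with the heat semigroup and the cutoffs $\eta_n$, $\nabla_i\mathsf f_n \in \mathbf F_{\delta^i}(-\Delta)$ with a common $\lambda$ up to an arbitrarily small enlargement of $\delta^i$, whence $\delta_{a_n} \le |c|^2(\sqrt d + 1)^2\delta_{\mathsf f_n}$ with $\delta_{\mathsf f_n} \le \delta_{\mathsf f} + o(1)$; likewise $b_n \to b$ in $L^2_{\loc}$ with uniform form bound $\le\delta$. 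Since $a_n \in [C^\infty]^{d\times d}$ is bounded and, because $-1<c$ is fixed and $\|\mathsf f_n\|_\infty\le 1$, uniformly elliptic in $n$, and $\nabla a_n$, $b_n$ are bounded and smooth, the operator $\Lambda_{C_\infty}(a_n,\nabla a_n + b_n)$ with domain $(1-\Delta)^{-1}C_\infty$ is a non-degenerate elliptic operator generating a positivity preserving contraction Feller semigroup on $C_\infty$.

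Next, exactly as in the proof of Theorem~\ref{thm:feller}, I would combine the $W^{1,q}\cap W^{1,qd/(d-2)}$ estimates \eqref{reg_double_star} — which by the previous paragraph hold for $(\mu+\Lambda_q(a_n,\nabla a_n + b_n))^{-1}$ with $K_l$, $\mu_0$ \emph{uniform in} $n$ — with the Sobolev embedding $W^{1,qd/(d-2)}\hookrightarrow C^{0,1-(d-2)/q}$ (valid since $q>d-2$) and the $L^p\rightarrow L^\infty$ iteration, to obtain uniform-in-$n$ bounds in $\mathcal B(L^r, C^{0,1-d/(rj)})$ for the resolvents. An Arzel\`a--Ascoli/compactness argument then produces a subsequential $s\mbox{-}C_\infty\mbox{-}\lim$ of $e^{-t\Lambda_{C_\infty}(a_n,\nabla a_n + b_n)}$, locally uniformly in $t\ge 0$, whose generator $-\Lambda_{C_\infty}(a,\nabla a + b)$ is identified, through its resolvent, with the operator furnished by Theorem~\ref{thm:feller} for the drift $\nabla a + b$; this gives (\textit{i}) and (\textit{ii}). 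Part (\textit{iii}) then follows from the standard correspondence between Feller semigroups and Feller processes: positivity preservation and $L^\infty$-contraction yield a sub-Markov integral kernel, strong continuity on $C_\infty$ yields the Feller property, and one extracts the transition probability function of a (possibly killed) Feller process. I expect the main obstacle to be the compatibility check of the second paragraph — verifying that the genuine approximants $\nabla a_n + b_n$ of the non-divergence operator enjoy the same uniform form-bounds and local convergence as the mollified drift used in Theorem~\ref{thm:feller}, so that the proof of the latter transfers without change.
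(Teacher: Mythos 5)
Your proposal is essentially the paper's intended argument: rewrite $-a\cdot\nabla^2+b\cdot\nabla=-\nabla\cdot a\cdot\nabla+(\nabla a+b)\cdot\nabla$, invoke Corollary~\ref{thm:reg2} to get the a priori estimates \eqref{reg_double_star} for $(\mu+\Lambda_q(a_n,\nabla a_n+b_n))^{-1}$ uniformly in $n$, and then run the argument of Theorem~\ref{thm:feller}. You are right that the compatibility of the approximants is the point that needs care, and your observation that the genuine divergence-form drift of $-a_n\cdot\nabla^2+b_n\cdot\nabla$ is $\nabla a_n+b_n$ rather than a mollification of $\nabla a+b$ is exactly the issue the paper disposes of in the Remarks following Corollary~\ref{thm:reg2}, where it is noted that the proofs work for any approximants with $\|\mathsf f_n\|_\infty=1$, \eqref{incl}, and $L^2_{\rm loc}$-convergence of $\mathsf f_n$, $\nabla_i\mathsf f_n$, $b_n$. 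The one place you deviate is the final extraction of the limit: you propose an Arzel\`a--Ascoli/compactness argument producing a subsequential limit which is then identified via the resolvent, whereas the paper avoids compactness and subsequences entirely. It combines the iteration estimate $\|u_n-u_m\|_\infty\le B\|u_n-u_m\|_{r_0}^\gamma$ with the $L^{r_0}$-strong resolvent convergence $(\mu+\Lambda_{r_0}(a,\nabla a+b))^{-1}=s\mbox{-}L^{r_0}\mbox{-}\lim_n(\mu+\Lambda_{r_0}(a_n,\nabla a_n+b_n))^{-1}$ (from \cite[Theorem~3.5]{KiS}) to conclude directly that $\{u_n\}$ is Cauchy in $C_\infty$, and then invokes Lemma~\ref{lem5} and the Trotter approximation theorem. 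Your route can be made to work (the $L^{r_0}$-convergence fixes the limit uniquely), but the paper's direct Cauchy argument is cleaner and is the one you should lean on; otherwise the proof is correct and matches the paper's structure.
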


\begin{remarks}
Since our assumptions on $\delta_{\mathsf{f}}$, $\delta_a$ and $\delta$ involve only strict inequalities, we may assume that 
\begin{equation}
\label{incl}
\text{\eqref{C} holds for $\mathsf{f}_n$}, \quad \nabla a_n \in \mathbf{F}_{\delta_a}, \quad b_n  \in \mathbf{F}_{\delta} \quad  \text{ with $\lambda \neq \lambda(n)$}
\end{equation}
for appropriate $\epsilon_n \downarrow 0$. 
In fact, the proofs work for any approximations $\{\mathsf{f}_n\}$, $\{b_n\} \subset [C^\infty]^d$ such that $\|\mathsf{f}_n\|_\infty=1$, \eqref{incl} holds, and
$$
\mathsf{f}_n \rightarrow \mathsf{f}, \nabla_i\mathsf{f}_n \rightarrow \nabla_i\mathsf{f} \text{ strongly in } [L_{\loc}^2]^{d}, \quad i=1,2,\dots,d,
$$
$$
b_n \rightarrow b \text{ strongly in } [L_{\loc}^2]^{d}.
$$
\end{remarks}

\section{Proof of Theorem \ref{thm:reg0}}

\textit{Proof of (i)}. In what follows, we use notation
$$
\langle h\rangle:=\int_{\mathbb R^d} h(x)dx, \quad \langle h,g\rangle:=\langle h\bar{g}\rangle.
$$

Define $t[u,v]:=\langle \nabla u \cdot a \cdot \nabla \bar{v} \rangle$, $D(t)=W^{1,2}$.
There is a unique self-adjoint operator $A\equiv A_2 \geq 0$ on $L^2$ associated with the form $t$: $D(A) \subset D(t)$, $\langle Au,v\rangle=t[u,v]$, $u \in D(A)$, $v \in D(t)$. $-A$ is the generator of a  positivity preserving $L^\infty$ contraction $C_0$ semigroup $T^t_2 \equiv e^{-tA}$,  $t \geq 0$, on $L^2$. Then
$T_r^t:=[T_t\upharpoonright L^r \cap L^2]_{L^r \rightarrow L^r}$ determines $C_0$ semigroup on $L^r$ for all $r \in [1,\infty[$.
The  generator $-A_r$ of $T^t_r \,(\equiv e^{-tA_r})$ is the desired operator realization of $\nabla \cdot a \cdot \nabla$ in $L^r$, $r \in [1,\infty[$. 
Moreover, $(\mu+A_r)^{-1}$ is well defined on $L^r$ for all $\mu>0$. This completes the proof of the  assertion (\textit{i}) of the theorem.

\smallskip

\textit{Proof of (ii)}. First, we prove an a priori variant of \eqref{reg_double_star}. 
Set $a_n:=I+c\mathsf{f}_n \otimes \mathsf{f}_n$, where $\mathsf{f}_n$ have been defined in the beginning of the paper.
Since our assumption on $\delta_{\mathsf{f}}$ is a strict inequality, we may assume that \eqref{C} holds for $\mathsf{f}_n$ for all $n \geq 1$ with $\lambda \neq \lambda(n)$ for appropriate $\epsilon_n \downarrow 0$.
We also note that $\|\mathsf{f}_n\|_\infty=1$. 

Set $u \equiv u_n := (\mu + A^n_q)^{-1} f$, $0 \leq f \in C_c^1$, where $A_q^n:= -\nabla \cdot a_n \cdot \nabla$, $D(A^n_q)=W^{2,q}$, $n \geq 1$. Clearly, $ 0 \leq u_n \in W^{3,q}$.

Denote $w \equiv w_n:=\nabla u_n$. For brevity, 
below we omit the index $n$: $\mathsf{f} \equiv \mathsf{f}_n$, $a \equiv a_n$, $A_q \equiv A_q^n$.
 Set
$$
I_q:=\sum_{r=1}^d\langle (\nabla_r w)^2 |w|^{q-2} \rangle, \quad
J_q:=\langle (\nabla |w|)^2 |w|^{q-2}\rangle,$$
$$
\bar{I}_{q}:= \langle  \bigl( \mathsf{f} \cdot \nabla w \bigr)^2 |w|^{q-2}\rangle, \quad
\bar{J}_{q}:= \langle (\mathsf{f} \cdot \nabla |w|)^2  |w|^{q-2}\rangle.
$$
Set $[F,G]_-:=FG-GF$.

\textbf{1.~}We multiply $\mu u  + A_q u = f $ by $\phi^{}:=- \nabla \cdot (w |w|^{q-2})$ and integrate:
\begin{equation*}
\mu \langle |w|^q \rangle +\langle A_q w^{}, w|w|^{q-2} \rangle + \langle [\nabla,A_q]_-u^{}, w|w|^{q-2}\rangle = \langle f, \phi^{} \rangle,
\end{equation*}
\[
\mu \langle |w|^q \rangle + I_q^{} + c\bar{I}_{q}^{} + (q-2)(J_q + c\bar{J}_{q}^{})+ \langle [\nabla,A_q]_-u^{}, w|w|^{q-2}\rangle = \langle  f, \phi \rangle.
\]
The term to evaluate is this: $$\langle [\nabla,A_q]_- u^{},w|w|^{q-2}\rangle :=\sum_{r=1}^d\langle [\nabla_r,A_q]_- u^{},w_r|w|^{q-2}\rangle.$$ From now on, we omit the summation sign in repeated indices.
Note that 
$$[\nabla_r,A_q]_-=-\nabla \cdot (\nabla_r a) \cdot \nabla, \qquad (\nabla_r a)_{il} = c(\nabla_r \mathsf{f}^i)\mathsf{f}^l + c\mathsf{f}^i\nabla_r \mathsf{f}^l.$$
Thus,
\begin{align*}
\langle [\nabla_r,A_q]_- u^{},w_r|w|^{q-2}\rangle=c\left\langle \bigl[(\nabla_r \mathsf{f}^i)\mathsf{f}^l + \mathsf{f}^i\nabla_r \mathsf{f}^l \bigr]w_l,\nabla_i(w_r|w|^{q-2})\right\rangle =:S_{1}+S_{2},
\end{align*}
\begin{align*}
S_{1}=c \big\langle (\nabla_r \mathsf{f}) \cdot (\nabla_r w) (\mathsf{f} \cdot w)|w|^{q-2}\big\rangle & + c(q-2)\big\langle (\nabla_r\mathsf{f}) \cdot (\nabla|w|) (\mathsf{f} \cdot w) w_r|w|^{q-3}\big\rangle, 
\end{align*}
\begin{align*}
S_{2}=c \big\langle (\nabla_r\mathsf{f}) \cdot w, (\mathsf{f} \cdot \nabla w_r)|w|^{q-2}\big\rangle &+ c(q-2)\big\langle (\nabla_r\mathsf{f}) \cdot w, w_r|w|^{q-3}\mathsf{f} \cdot \nabla|w| \big\rangle.
\end{align*}
By the quadratic estimates and the condition \eqref{C},
$$
S_{1} \leq |c| \biggl[\alpha\bigg(\delta_{\mathsf{f}} \frac{q^2}{4}J_q + \lambda\delta_{\mathsf{f}} \|w\|_q^q \bigg) + \frac{1}{4\alpha}I_q\biggr] + |c|(q-2) \biggl[\alpha_1\bigg(\delta_{\mathsf{f}} \frac{q^2}{4}J_q + \lambda\delta_{\mathsf{f}} \|w\|_q^q \bigg) + \frac{1}{4\alpha_1} J_q\biggr], \quad \alpha, \alpha_1>0
$$
$$
S_{2} \leq |c| \biggl[\gamma\bigg(\delta_{\mathsf{f}} \frac{q^2}{4}J_q + \lambda\delta_{\mathsf{f}} \|w\|_q^q \bigg) + \frac{1}{4\gamma}\bar{I}_q\biggr] + |c|(q-2) \biggl[\gamma_1\bigg(\delta_{\mathsf{f}} \frac{q^2}{4}J_q + \lambda\delta_{\mathsf{f}} \|w\|_q^q \bigg) + \frac{1}{4\gamma_1}\bar{J}_q\biggr], \quad \gamma, \gamma_1>0.
$$
Thus, selecting $\alpha = \alpha_1 =\frac{1}{q\sqrt{\delta_{\mathsf{f}}}}$, we obtain the inequality
\begin{align}
&\mu \|w\|_q^q + I_q + c\bar{I}_{q} + (q-2)(J_q + c\bar{J}_q) \notag \\ 
&\leq |c| \biggl[q\frac{\sqrt{\delta_{\mathsf{f}}}}{4}J_q + \frac{q\sqrt{\delta_{\mathsf{f}}}}{4}I_q\biggr] + |c|(q-2) \frac{q\sqrt{\delta_{\mathsf{f}}}}{2}J_q \notag \\
&+ |c| \biggl[\gamma\delta_{\mathsf{f}} \frac{q^2}{4}J_q + \frac{1}{4\gamma}\bar{I}_q\biggr] + |c|(q-2) \bigg[\gamma_1 \delta_{\mathsf{f}} \frac{q^2}{4}J_q +\frac{1}{4\gamma_1}\bar J_q \bigg] \label{be2_}\\
&+\mu_{0}\|w\|_q^q + \langle f,\phi \rangle \notag 
\end{align}
where $\mu_{0}:=|c|\lambda\sqrt{\delta_{\mathsf{f}}}\big(q^{-1} + \gamma \sqrt{\delta_{\mathsf{f}}})+|c|(q-2)\lambda\sqrt{\delta_{\mathsf{f}}}\big(q^{-1} + \gamma_1 \sqrt{\delta_{\mathsf{f}}} \big)$. 
\smallskip

\smallskip

\textbf{2.~}Let us prove that there exists constant $\eta>0$ such that
\begin{equation}
\label{star_ineq}
\tag{$\ast$}
(\mu - \mu_0) \| w \|_q^q + \eta J_q \leq \langle f,\phi \rangle.
\end{equation}

\textbf{Case $c>0$}. First, suppose that $1-\frac{q\sqrt{\delta_{\mathsf{f}}}}{4} \geq 0$. We select $\gamma =\gamma_1:=\frac{1}{4}$, 
so the terms $\bar{I}_q$, $\bar{J}_q$ are no longer present in \eqref{be2_}. By the assumption of the theorem $1-c\frac{q\sqrt{\delta_{\mathsf{f}}}}{4} \geq 0$, so using $J_q \leq I_q$ we obtain 
\begin{align*}
(\mu - \mu_{0}) \|w\|_q^q + (q-1)\bigg[1- c\frac{q\sqrt{\delta_{\mathsf{f}}}}{2} - c\frac{q^2\delta_{\mathsf{f}}}{16}\bigg]J_q  
\leq \langle f, \phi \rangle, 
\end{align*}
where $\mu_{0}= c \lambda\sqrt{\delta_{\mathsf{f}}}(q-1)\big(\frac{1}{q} + \frac{\sqrt{\delta_{\mathsf{f}}}}{4}\big)$ and the coefficient $[\dots]$ is strictly positive by the assumptions of the theorem.

Now, suppose that $1-\frac{q\sqrt{\delta_{\mathsf{f}}}}{4} < 0$. We select $\gamma=\gamma_1:=\frac{1}{q\sqrt{\delta_{\mathsf{f}}}}$ and replace $\bar{J}_q$, $\bar{I}_q$ by $J_q$, $I_q$. Then, since $1-c\big(\frac{q\sqrt{\delta_{\mathsf{f}}}}{2}-1\big) \geq 0$ by the assumptions of the theorem, we apply $J_q \leq I_q$ to obtain
\begin{align*}
(\mu - \mu_{0}) \|w\|_q^q + (q-1) \bigg[1-c\big(q\sqrt{\delta_{\mathsf{f}}}-1\big) \bigg]J_q   \leq \langle f, \phi \rangle,
\end{align*}
where $\mu_{0} = c \lambda\sqrt{\delta_{\mathsf{f}}}(q-1)\big(\frac{1}{q} + \frac{1}{q}\big)$ and the coefficient  $[\dots]$ is strictly positive by the assumption of the theorem.
We have proved \eqref{star_ineq} with $\mu_0 = c \lambda\sqrt{\delta_{\mathsf{f}}}(q-1)\big(\frac{1}{q} + \frac{\sqrt{\delta_{\mathsf{f}}}}{4} \vee \frac{1}{q}\big)$.

\begin{remark*}
Elementary considerations show that the above choice of $\alpha$, $\alpha_1$, $\gamma$, $\gamma_1$ is the best possible.
\end{remark*}

\smallskip

\textbf{Case $c<0$}. We select $\gamma=\gamma_1:=\frac{1}{q\sqrt{\delta_{\mathsf{f}}}}$, so that
\begin{align*}
&\mu \|w\|_q^q + \bigg(1-|c|\frac{q\sqrt{\delta_{\mathsf{f}}}}{4}\bigg)I_q^{}   + \biggl[q-2- |c|(q-1)\frac{q\sqrt{\delta_{\mathsf{f}}}}{2}-|c|(q-2)\frac{q\sqrt{\delta_{\mathsf{f}}}}{4} \biggr]J_q \notag   \\
& \leq |c|\bigg(1+\frac{q\sqrt{\delta_{\mathsf{f}}}}{4}\bigg)\bar{I}_{q}^{} + |c|(q-2)\bigg(1+\frac{q\sqrt{\delta_{\mathsf{f}}}}{4}\bigg)\bar{J}_{q}^{} + \mu_0\|w\|_q^{q}+ \langle f, \phi \rangle,
\end{align*}
where $\mu_0=2c \lambda\sqrt{\delta_{\mathsf{f}}}\frac{q-1}{q}$. 
Next, using $\bar I_q \leq I_q$, $\bar J_q \leq J_q$, we obtain
\begin{align*}
&(\mu - \mu_0) \|w\|_q^q + \bigg(1- |c|-|c|\frac{q\sqrt{\delta_{\mathsf{f}}}}{2}\bigg) I_q^{}  \\
& + \biggl[q-2- |c|(q-1)\frac{q\sqrt{\delta_{\mathsf{f}}}}{2}-|c|(q-2)\frac{q\sqrt{\delta_{\mathsf{f}}}}{4} -|c|(q-2)\bigg(1+\frac{q\sqrt{\delta_{\mathsf{f}}}}{4}\bigg) \biggr]J_q \notag  \leq \langle f, \phi \rangle.
\end{align*}
By the assumptions of the theorem, $1- |c|-|c|\frac{q\sqrt{\delta_{\mathsf{f}}}}{2} \geq 0$. Therefore, by $I_q \geq J_q$,
\[
(\mu - \mu_0) \|w\|_q^q + \biggl[q-1 - |c|(q-1)- |c|q^2 \sqrt{\delta_{\mathsf{f}}} \biggr]J_q \notag  \leq \langle f, \phi \rangle,
\]
and hence the coefficient $[\dots]$ is strictly positive. We have proved \eqref{star_ineq}.

\smallskip

\textbf{3.~}We estimate the term $\langle f, \phi \rangle$  as follows.

\begin{lemma} 
\label{f_est_lem}
For each $\varepsilon_0>0$ there exists a constant $C=C(\varepsilon_0)<\infty$  such that 
$$
\langle f, \phi \rangle \leq \varepsilon_0 I_q + C \|w\|_q^{q-2} \|f\|^2_q.
$$
\end{lemma}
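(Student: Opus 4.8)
The plan is to estimate $\langle f,\phi\rangle$ \emph{directly}, keeping $f$ undifferentiated. The temptation is to integrate by parts (which would replace $\phi$ by $w|w|^{q-2}$ at the cost of producing $\nabla f$, hence $\|\nabla f\|_q$ rather than $\|f\|_q$), or to substitute the equation for $u$ into $\phi$ (which would produce $\nabla\mathsf{f}$-terms and, through form-boundedness, an unwanted $\|w\|_q^q$); both should be \emph{avoided}. The only inputs I would use are the pointwise structure of $\phi$ and the definition of $I_q$.

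First I would record the pointwise form of $\phi$. Since $0\le u\in W^{3,q}$ we have $w=\nabla u\in W^{2,q}$, and the product rule gives, a.e.,
\[
\phi=-\nabla\cdot\bigl(w|w|^{q-2}\bigr)=-(\nabla\cdot w)\,|w|^{q-2}-(q-2)\,|w|^{q-3}\,\bigl(w\cdot\nabla|w|\bigr).
\]
Applying Cauchy--Schwarz to the sum $\nabla\cdot w=\sum_i\nabla_iw_i$ gives $|\nabla\cdot w|\le\sqrt{d}\,|\nabla w|$, and the identity $|w|\,\nabla_i|w|=\sum_jw_j\nabla_iw_j$ gives $|w\cdot\nabla|w||\le|w|\,|\nabla w|$; hence the pointwise bound
\[
|\phi|\le(\sqrt{d}+q-2)\,|\nabla w|\,|w|^{q-2}\qquad\text{a.e.},\qquad |\nabla w|^2:=\textstyle\sum_{r,j}(\nabla_rw_j)^2 .
\]

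Then I would close by Cauchy--Schwarz with respect to the measure $|w|^{q-2}dx$, Young's inequality, and Hölder's inequality. Since $\langle|\nabla w|^2|w|^{q-2}\rangle=I_q$,
\[
\langle f,\phi\rangle\le(\sqrt{d}+q-2)\,\langle f^2|w|^{q-2}\rangle^{1/2}\,I_q^{1/2}\le\varepsilon_0\,I_q+\frac{(\sqrt{d}+q-2)^2}{4\varepsilon_0}\,\langle f^2|w|^{q-2}\rangle ,
\]
and Hölder with exponents $\frac{q}{2},\frac{q}{q-2}$ (here $q\ge2$) gives $\langle f^2|w|^{q-2}\rangle\le\|f\|_q^2\,\|w\|_q^{q-2}$, which is the claim with $C(\varepsilon_0)=(\sqrt{d}+q-2)^2/(4\varepsilon_0)$. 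The computation is elementary; the one point needing a little care is the justification of the pointwise identity for $\phi$ and the finiteness of the weighted integrals at the zero set of $w$ (relevant when $q<3$), which is the routine approximation: replace $|w|^{q-2}$ by $(\varepsilon+|w|^2)^{(q-2)/2}$, carry out the estimate, and let $\varepsilon\downarrow0$.
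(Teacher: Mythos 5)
Your proof is correct and follows essentially the same route as the paper's: expand $\phi$ by the product rule, bound $|\Delta u|$ by $\sqrt d\,|\nabla w|$ and $|w\cdot\nabla|w||$ by $|w|\,|\nabla w|$, then close with Cauchy--Schwarz, Young, and Hölder to reach $\varepsilon_0 I_q + C\|w\|_q^{q-2}\|f\|_q^2$. The only cosmetic difference is that the paper keeps the two pieces $F_1,F_2$ separate (with $F_2$ bounded via $J_q\le I_q$) rather than combining them into a single pointwise bound on $|\phi|$, and your closing remark about regularizing $|w|^{q-2}$ is a sensible extra precaution but not a departure from the argument.
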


\begin{proof}[Proof of Lemma \ref{f_est_lem}]
We have:
\[
 \langle f, \phi \rangle =\langle - \Delta u, |w|^{q-2}f\rangle + (q-2) \langle |w|^{q-3} w \cdot \nabla |w|,f\rangle=:F_1+F_2.
 \]
Due to $|\Delta u|^2 \leq d |\nabla_r w|^2$ and $\langle |w|^{q-2}f\rangle \leq \|w\|_q^{q-2}\|f\|^2_q$,
\[
F_1 \leq \sqrt{d}I_q^{\frac{1}{2}}\|w\|_q^{\frac{q-2}{2}}\|f\|_q, \qquad F_2 \leq (q-2)J_q^{\frac{1}{2}}\|w\|_q^{\frac{q-2}{2}}\|f\|_q,  
\]
Now the standard quadratic estimates yield the lemma.
\end{proof}

We choose $\varepsilon_0>0$ in Lemma \ref{f_est_lem} so small that in the estimates below we can ignore 
$\varepsilon_0 I_q$.

\textbf{4.~}Clearly, \eqref{star_ineq} yields the inequalities
$$ \|\nabla u_n\|_{q} \leq  K_1(\mu-\mu_0)^{-\frac{1}{2}} \|f\|_q, \quad K_1:=C^\frac{1}{2},$$
$$\|\nabla u_n \|_{q j} \leq  K_2(\mu-\mu_0)^{-\frac{1}{2}+\frac{1}{q}} \|f\|_q, \quad K_2:=C_{S}\eta^{-\frac{1}{q}}(q^2/4)^{\frac{1}{q}} C^{\frac{1}{2}-\frac{1}{q}},$$
where $C_S$ is the constant in the Sobolev Embedding Theorem.
So, \cite[Theorem 3.5]{KiS}
($
(\mu+A_q)^{-1}=s{\mbox-}L^q{\mbox-}\lim_n (\mu+A_q^n)^{-1}$)
yields \eqref{reg_double_star}.
The proof of Theorem \ref{thm:reg0} is completed.

\section{Proof of Theorem \ref{thm:reg}}

\textit{Proof of (i)}. Recall that a vector field $b:\mathbb R^d \rightarrow \mathbb R^d$ belongs to $\mathbf F_{\delta_1}(A)$, $\delta_1>0$, the class of form-bounded vector fields (with respect to $A \equiv A_2:= [-\nabla \cdot a \cdot \nabla\upharpoonright C_c^\infty]^{\rm clos}_{2 \to 2}$), if $b_a^2:=b \cdot a^{-1} \cdot b
 \in L^1_\loc$ and there exists a constant $\lambda=\lambda_{\delta_1}>0$ such that 
$$\|b_a(\lambda + A)^{-\frac{1}{2}}\|_{2\rightarrow 2}\leq \sqrt{\delta_1}.$$
It is easily seen that if $b \in \mathbf{F}_\delta$, then 
$b \in \mathbf{F}_{\delta_1}(A)$, with $\delta_1:=[1 \vee (1+c)^{-2}]\,\delta$. By the assumptions of the theorem, $\delta_1<4$. 
Therefore, by \cite[Theorem 3.2]{KiS}, $-\nabla \cdot a \cdot \nabla + b \cdot \nabla$ has an operator realization  $\Lambda_q(a,b)$ in $L^q$, $q \in \big[\frac{2}{2-\sqrt{\delta_1}}, \infty\big[$, as the (minus) generator of a  positivity preserving $L^\infty$ contraction quasi contraction $C_0$ semigroup. Moreover, $(\mu+\Lambda_q(a,b))^{-1}$ is well defined on $L^q$ for all $\mu>\frac{\lambda \delta}{2(q-1)}$.
This completes the proof of (\textit{i}).

\smallskip

\textit{Proof of (ii)}.  First, we prove an a priori variant of \eqref{reg_double_star}. 
Set $a_n:=I+c\mathsf{f}_n \otimes \mathsf{f}_n$, where $\mathsf{f}_n$ have been defined in the beginning of the paper.
Since our assumptions on $\delta_{\mathsf{f}}$, $\delta_a$ and $\delta$ involve only strict inequalities, we may assume that \eqref{C} holds for $\mathsf{f}_n$, $\nabla a_n \in \mathbf{F}_{\delta_a}$, $b_n  \in \mathbf{F}_{\delta}$  with $\lambda \neq \lambda(n)$ for appropriate $\epsilon_n \downarrow 0$.
We also note that $\|\mathsf{f}_n\|_\infty=1$. 

Denote $A_q^n:= -\nabla \cdot a_n \cdot \nabla$, $D(A^n_q)=W^{2,q}$. Set $u \equiv u_n := (\mu + \Lambda_q(a_n,b_n))^{-1} f$, $0 \leq f \in C_c^1$, $n \geq 1$, where $\Lambda_q(a_n,b_n)=A^n_q + b_n \cdot \nabla$, $D(\Lambda_q(a_n,b_n))=D(A^n_q)$. Clearly, $ 0 \leq u_n \in W^{3,q}$. 
It is easily seen that $b_n \in \mathbf{F}_{\delta_1}(A^n)$ with $\lambda \neq \lambda(n)$, so $(\mu+\Lambda_q(a_n,b_n))^{-1}$ are well defined on $L^q$ for all $n \geq 1$, $\mu>\frac{\lambda \delta}{2(q-1)}$.

\smallskip

\textbf{1.~}Denote $w \equiv w_n:=\nabla u_n$. Below we omit the index $n$: $\mathsf{f} \equiv \mathsf{f}_n$, $a \equiv a_n$, $b \equiv b_n$,  $A_q \equiv A_q^n$.
 Set
$$
I_q:=\langle (\nabla_r w)^2 |w|^{q-2} \rangle, \quad
J_q:=\langle (\nabla |w|)^2 |w|^{q-2}\rangle,$$
$$
\bar{I}_{q}:= \langle  \bigl( \mathsf{f} \cdot \nabla w \bigr)^2 |w|^{q-2}\rangle, \quad
\bar{J}_{q}:= \langle (\mathsf{f} \cdot \nabla |w|)^2  |w|^{q-2}\rangle.
$$
\smallskip
Arguing as in the proof of Theorem \ref{thm:reg0}, we arrive at
\begin{align}
&\mu \langle |w|^q \rangle + I_q^{} + c\bar{I}_{q}^{} + (q-2)(J_q + c\bar{J}_{q}^{}) \notag \\ 
&\leq |c| \biggl[\alpha\delta_{\mathsf{f}} \frac{q^2}{4}J_q + \frac{1}{4\alpha}I_q\biggr] + |c|(q-2) \biggl[\alpha_1\delta_{\mathsf{f}} \frac{q^2}{4}J_q + \frac{1}{4\alpha_1}J_q\biggr] \label{be2} \\
&+ |c| \biggl[\gamma\delta_{\mathsf{f}} \frac{q^2}{4}J_q + \frac{1}{4\gamma}\bar{I}_q\biggr] + |c|(q-2) \biggl[\gamma_1\delta_{\mathsf{f}} \frac{q^2}{4}J_q + \frac{1}{4\gamma_1}\bar{J}_q\biggr] \notag \\
&+ \mu_{00}\|w\|_q^q + \langle -b \cdot w, \phi \rangle + \langle f,\varphi\rangle, \qquad \text{ with } \alpha=\alpha_1:=\frac{1}{q\sqrt{\delta_{\mathsf{f}}}}, \notag
\end{align}
where $\mu_{00}:=|c|\lambda\sqrt{\delta_{\mathsf{f}}}\big(q^{-1} + \gamma \sqrt{\delta_{\mathsf{f}}})+|c|(q-2)\lambda\sqrt{\delta_{\mathsf{f}}}\big(q^{-1} + \gamma_1 \sqrt{\delta_{\mathsf{f}}} \big)$, and $\gamma,\gamma_1>0$ are to be chosen.

\medskip

\textbf{2.~}We estimate the term $\langle -b\cdot w, \phi^{} \rangle$ as follows.

\begin{lemma}
\label{b_est_lem}
There exist constants $C_i$ \rm{($i=0,1$)} such that
\begin{align*} 
 \langle - b\cdot w, \phi \rangle \leq \biggl[\big(\sqrt{\delta}\sqrt{\delta_a}+\delta\big)\frac{q^2}{4}+(q-2)\frac{q\sqrt{\delta}}{2}\biggr]J_q + |c|\frac{q\sqrt{\delta}}{2}J_q^{\frac{1}{2}} \bar{I}_{q}^{\frac{1}{2}}
  + C_0\|w\|^q_q + C_1\|w\|_q^{q-2} \|f\|^2_q.
\end{align*}
\end{lemma}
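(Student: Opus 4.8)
The plan is to expand $\langle -b\cdot w,\phi\rangle$ using $\phi=-\nabla\cdot(w|w|^{q-2})$ and group terms by how many derivatives land on $w$. Writing $\phi=-\nabla_i(w_i|w|^{q-2})$, integration against $-b\cdot w = -b_jw_j$ produces
$$
\langle -b\cdot w,\phi\rangle = \langle b_jw_j,\nabla_i(w_i|w|^{q-2})\rangle = \langle b_jw_j,(\nabla_i w_i)|w|^{q-2}\rangle + (q-2)\langle b_jw_j,w_i|w|^{q-3}\nabla_i|w|\rangle.
$$
The first term contains $\mathrm{div}\,w=\Delta u$; I would \emph{not} estimate it crudely via $|\Delta u|\le\sqrt d\,I_q^{1/2}$ (that would only bring in $I_q$, not $J_q$), but instead re-insert the PDE. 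Indeed $\Delta u = -\nabla\cdot(c\,\mathsf f\otimes\mathsf f\cdot\nabla u) + (\text{lower order from } \mu u + \Lambda_q u = f)$; more precisely, since $A_q u = -\Delta u - c\,\nabla\cdot(\mathsf f(\mathsf f\cdot w))$ and $\mu u + A_q u + b\cdot w = f$, we can substitute $\Delta u = \mu u + b\cdot w - f + c\,\nabla\cdot(\mathsf f(\mathsf f\cdot w))$. This is exactly the device that converts a bare $\Delta u$ into terms controlled by $J_q$, $\bar I_q$, $\|w\|_q^q$ and $\|f\|_q$, and it explains the appearance of the mixed constant $\sqrt\delta\sqrt{\delta_a}$ and of $|c|\frac{q\sqrt\delta}{2}J_q^{1/2}\bar I_q^{1/2}$ in the claimed bound.

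Next I would carry out the quadratic/form-bound estimates. The genuinely "$b$-times-one-derivative" pieces, namely $\langle b_jw_j, w_i|w|^{q-3}\nabla_i|w|\rangle$ and the part of the first term coming from $\mu u - f$, are estimated by Cauchy–Schwarz in the pointwise $|w|^{q-2}\,dx$ measure followed by the form-boundedness $\||b|(\lambda-\Delta)^{-1/2}\|_{2\to2}\le\sqrt\delta$ applied to $\||w|^{q/2}\|$-type quantities, exactly as $S_1,S_2$ were handled: $b\cdot w\,|w|^{(q-2)/2}$ pairs against $\nabla(|w|^{q/2})$, giving $\|b\,|w|^{q/2}\|\le\sqrt\delta(\frac q2 J_q^{1/2}+\sqrt\lambda\|w\|_q^{q/2})$ after using $|\nabla|w|^{q/2}| = \frac q2|w|^{q/2-1}|\nabla|w||$. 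The $c\,\nabla\cdot(\mathsf f(\mathsf f\cdot w))$ contribution, after the substitution, pairs a $b$-factor (giving $\sqrt\delta$) with a $\nabla\mathsf f$-factor (giving $\sqrt{\delta_a}$ via $\nabla a\in\mathbf F_{\delta_a}$) and with $\mathsf f\cdot\nabla w$ (giving $\bar I_q^{1/2}$, using $\|\mathsf f\|_\infty=1$), which is where the $\sqrt\delta\sqrt{\delta_a}\,\frac{q^2}{4}J_q$ and the $|c|\frac{q\sqrt\delta}{2}J_q^{1/2}\bar I_q^{1/2}$ terms come from; the purely $b$-against-$b$ piece yields $\delta\frac{q^2}{4}J_q$, and the factor $(q-2)\frac{q\sqrt\delta}{2}J_q$ tracks the $(q-2)$-weighted term. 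All residual $\lambda\|w\|_q^q$ contributions are collected into $C_0\|w\|_q^q$, and the $f$-terms (from substituting $-f$ for part of $\Delta u$, handled as in Lemma \ref{f_est_lem}) into $C_1\|w\|_q^{q-2}\|f\|_q^2$, up to an $\varepsilon_0 I_q$ absorbed with $\varepsilon_0$ small.

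The main obstacle is the bookkeeping in the substitution step: one must keep $I_q$ (the "good" top-order term that will be absorbed on the left only through the overall constraint on $c,\delta,\delta_{\mathsf f}$) from reappearing with an uncontrolled coefficient, which forces using the PDE to rewrite $\Delta u$ rather than dominating it directly, and then tracking precisely which pieces produce $J_q$, which produce the cross term $J_q^{1/2}\bar I_q^{1/2}$, and which are harmless lower-order. Getting the exact coefficient $\big(\sqrt\delta\sqrt{\delta_a}+\delta\big)\frac{q^2}{4}+(q-2)\frac{q\sqrt\delta}{2}$ in front of $J_q$ requires choosing the splitting parameters in the Cauchy–Schwarz steps optimally (as in the Remark following \eqref{star_ineq}); everything else is the routine quadratic-estimate machinery already used for $S_1,S_2$ in Section 3.
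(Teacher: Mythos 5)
Your proposal follows the paper's proof essentially verbatim: the key device --- re-inserting the equation to replace $\Delta u$ by $-\nabla\cdot(a-I)\cdot w +\mu u + b\cdot w - f$, i.e.\ $-\Delta u = \nabla a\cdot w + c\,\mathsf{f}\cdot(\mathsf{f}\cdot\nabla w) - \mu u - b\cdot w + f$, then applying Cauchy--Schwarz in the weighted measure $|w|^{q-2}\,dx$ together with the form-bound estimates $B_q\le\frac{q^2\delta}{4}J_q+\lambda\delta\|w\|_q^q$ and $P_q\le\frac{q^2\delta_a}{4}J_q+\lambda\delta_a\|w\|_q^q$ --- is exactly what the paper does, and your accounting of which pairings produce the $\sqrt{\delta\delta_a}\frac{q^2}{4}J_q$, $\delta\frac{q^2}{4}J_q$, $(q-2)\frac{q\sqrt{\delta}}{2}J_q$ and $|c|\frac{q\sqrt{\delta}}{2}J_q^{1/2}\bar I_q^{1/2}$ terms is correct. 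The only slip is a sign in your substitution ($-c\,\nabla\cdot(\mathsf{f}(\mathsf{f}\cdot w))$, not $+c$), which is harmless since every piece is estimated in absolute value.
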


\begin{proof}
We have:
\begin{align*}
 \langle-b\cdot w, \phi \rangle & =\langle - \Delta u, |w|^{q-2}(-b\cdot w)\rangle + (q-2) \langle |w|^{q-3} w \cdot \nabla |w|,-b\cdot w\rangle\\
&=:F_1+F_2.
\end{align*}
Set $B_q:=\langle |b \cdot w|^2|w|^{q-2}\rangle$. We have
$$F_2
\leq (q-2) B_{q}^\frac{1}{2} J_{q}^\frac{1}{2}.$$
Next, we bound $F_1$. Recall that $\nabla a=c\big[({\rm div}\mathsf{f})\mathsf{f} + \mathsf{f} \cdot \nabla\mathsf{f}\big]$. We represent $-\Delta u=\nabla \cdot (a-1)\cdot w -\mu u - b\cdot w +f$,
and evaluate: $\nabla \cdot (a-1)\cdot w=\nabla a \cdot w + c \mathsf{f} \cdot (\mathsf{f} \cdot \nabla w)$, so
\begin{align*}
F_1& =\langle \nabla \cdot (a-1)\cdot w, |w|^{q-2}(-b\cdot w)\rangle  + \langle (-\mu u - b\cdot w +f),|w|^{q-2}(-b\cdot w) \rangle \\
& = \langle \nabla a \cdot w, |w|^{q-2}(-b\cdot w) \rangle \\
& + c \langle \mathsf{f} \cdot (\mathsf{f} \cdot \nabla w), |w|^{q-2}(-b\cdot w) \rangle \\
& + \langle (-\mu u - b\cdot w +f),|w|^{q-2}(-b\cdot w) \rangle.
\end{align*}
Set $P_q:=\langle |\nabla a \cdot w|^2 |w|^{q-2} \rangle $. We bound $F_1$ from above by applying consecutively the following estimates:

\smallskip

$1^\circ$) $\langle \nabla a \cdot w, |w|^{q-2}(-b\cdot w) \rangle \leq P_{q}^{\frac{1}{2}}B_{q}^{\frac{1}{2}}$.

\smallskip

$2^\circ$) $\langle  \mathsf{f} \cdot (\mathsf{f} \cdot \nabla w), |w|^{q-2}(-b\cdot w) \rangle 
\leq \bar{I}_{q}^{\frac{1}{2}}B_{q}^{\frac{1}{2}}$.

\smallskip

$3^\circ$) $\langle \mu  u , |w|^{q-2} (-b \cdot w) \rangle  \leq \frac{\mu}{\mu-\omega_q} B_{q}^\frac{1}{2} \|w\|_q^\frac{q-2}{2}  \|f\|_q$ $\bigl(\text{here } \frac{2}{2-\sqrt{\delta}} < q \Rightarrow \|u\|_q \leq (\mu - \omega_q )^{-1} \|f\|_q \bigr)$.

\smallskip

$4^\circ$) $\langle b \cdot w, |w|^{q-2} b \cdot w \rangle = B_{q} .$

\smallskip

$5^\circ$) $\langle f, |w|^{q-2} (- b \cdot w)\rangle |\leq B_{q}^\frac{1}{2} \|w\|_q^\frac{q-2}{2} \|f\|_q .$

\smallskip

In $3^\circ$) and $5^\circ$) we estimate $B_{q}^\frac{1}{2} \|w\|_q^\frac{q-2}{2} \|f\|_q \leq \varepsilon_0 B_q + \frac{1}{4\varepsilon_0}\|w\|_q^{q-2} \|f\|^2_q$ ($\varepsilon_0>0$).

\smallskip

Therefore,
\begin{align*}
 \langle-b\cdot w, \phi \rangle \leq P_q^{\frac{1}{2}}B_{q}^{\frac{1}{2}} + |c|\bar{I}_{q}^{\frac{1}{2}}B_{q}^{\frac{1}{2}}+B_{q} + (q-2)B_{q}^{\frac{1}{2}}J_{q}^{\frac{1}{2}} + \varepsilon_0 B_q +  C_1(\varepsilon_0)\|w\|_q^{q-2} \|f\|^2_q.
\end{align*}
It is easily seen that $b \in \mathbf{F}_\delta$ 
is equivalent to the inequality
$$
\langle b^2 |\varphi|^2 \rangle \leq \delta \langle |\nabla \varphi|^2\rangle  + \lambda\delta\langle |\varphi|^2 \rangle, \quad \varphi \in W^{1,2}.
$$
Thus,
\begin{equation*}
B_q \leq \|b |w|^\frac{q}{2} \|_2^2 \leq \delta \| \nabla |w|^\frac{q}{2} \|_2^2 + \lambda\delta \|w\|_q^q = \frac{ q^2\delta}{4} J_q + \lambda\delta \|w\|_q^q.
\end{equation*}
Similarly, using that  $\nabla a \in \mathbf{F}_{\delta_a}$, we obtain
\begin{equation*}
P_q \leq \|(\nabla a) |w|^\frac{q}{2} \|_2^2 \leq \delta_a \| \nabla |w|^\frac{q}{2} \|_2^2 + \lambda\delta_a \|w\|_q^q = \frac{ q^2\delta_a}{4} J_q + \lambda\delta_a \|w\|_q^q.  
\end{equation*}
Then selecting $\varepsilon_0>0$ sufficiently small, and noticing that the assumption on $\delta$, $\delta_a$ in the theorem are strict inequalities, we can and will ignore below the terms multiplied by $\varepsilon_0$. The proof of Lemma \ref{b_est_lem} is completed.
\end{proof}

In \eqref{be2}, we apply Lemma \ref{b_est_lem} where the inequality $\frac{q\sqrt{\delta}}{2} J_q^{\frac{1}{2}}\bar{I}_q^{\frac{1}{2}} \leq \gamma_2\frac{q^2\delta}{4}J_q+\frac{1}{4\gamma_2}\bar{I}_q$, $\gamma_2>0$, is used. Thus, we have
\begin{align}
&\mu \|w\|_q^q + I_q^{} + c\bar{I}_{q}^{} + (q-2)(J_q + c\bar{J}_{q}^{}) \notag \\ 
&\leq |c| \biggl[\frac{q\sqrt{\delta_{\mathsf{f}}}}{4} J_q + \frac{q\sqrt{\delta_{\mathsf{f}}}}{4} I_q\biggr] + |c|(q-2) \frac{q\sqrt{\delta_{\mathsf{f}}}}{2} J_q  \label{be3} \\
&+ |c| \biggl[(\gamma\delta_{\mathsf{f}}+\gamma_2 \delta )\frac{q^2}{4}J_q + \biggl(\frac{1}{4\gamma}+\frac{1}{4\gamma_2}\biggr)\bar{I}_q\biggr] + |c|(q-2) \biggl[\gamma_1\delta_{\mathsf{f}} \frac{q^2}{4}J_q + \frac{1}{4\gamma_1}\bar{J}_q\biggr]  \notag \\
&+  \biggl[\big(\sqrt{\delta}\sqrt{\delta_a}+\delta\big)\frac{q^2}{4}+(q-2)\frac{q\sqrt{\delta}}{2}\biggr]J_q 
  + \mu_0\|w\|^q_q + C_1\|w\|_q^{q-2} \|f\|^2_q + \langle f,\phi\rangle, \notag
\end{align}
where $\mu_0:=|c|\lambda\sqrt{\delta_{\mathsf{f}}}\big(q^{-1} + \gamma \sqrt{\delta_{\mathsf{f}}})+|c|(q-2)\lambda\sqrt{\delta_{\mathsf{f}}}\big(q^{-1} + \gamma_1 \sqrt{\delta_{\mathsf{f}}} \big) + C_0$.

\medskip

\textbf{3.~}Let us prove that there exists constant $\eta>0$ such that
\begin{equation}
\label{star_ineq_}
\tag{$\ast$}
(\mu-\mu_0) \|w\|_q^q+\eta J_q \leq C_1\|w\|_q^{q-2}\|f\|_q^2+\langle f,\phi\rangle.
\end{equation}

Set $Q:=\big(\sqrt{\delta}\sqrt{\delta_a}+\delta\big)\frac{q^2}{4} + (q-2)\frac{q\sqrt{\delta}}{2}$.

\textbf{Case $c>0$}. 
First, suppose that $1-\frac{q\sqrt{\delta_{\mathsf{f}}}}{4}-\frac{q\sqrt{\delta}}{4} \geq 0$. We select $\gamma$, $\gamma_2>0$ such that $\frac{1}{4\gamma}+\frac{1}{4\gamma_2}=1$ while $\gamma\delta_{\mathsf{f}}+\gamma_2 \delta$ attains its minimal value. It is easily seen that $\gamma=\frac{1}{4}\bigl(1+\sqrt{\frac{\delta}{\delta_{\mathsf{f}}}} \bigr)$, $\gamma_2=\frac{1}{4}\bigl(1+\sqrt{\frac{\delta_{\mathsf{f}}}{\delta}} \bigr)$. We have $1-\frac{q\sqrt{\delta_{\mathsf{f}}}}{4} \geq 0$, and select $\gamma_1=\frac{1}{4}$.
Thus, the terms $\bar{I}_q$, $\bar{J}_q$  are no longer present in \eqref{be3}:
\begin{align*}
&\mu \|w\|_q^q + \biggl(1-c\frac{q\sqrt{\delta_{\mathsf{f}}}}{4} \biggr)I_q \\
&+  \biggl[q-2- c\frac{q\sqrt{\delta_{\mathsf{f}}}}{4} - c(q-2)\frac{q\sqrt{\delta_{\mathsf{f}}}}{2} - c(\delta_{\mathsf{f}} + 2 \sqrt{\delta_{\mathsf{f}}\delta} + \delta) \frac{q^2}{16}- c(q-2)\frac{q^2\delta_{\mathsf{f}}}{16} - Q\biggr]J_q \\
&\leq  \mu_0\|w\|_q^{q}+ C_1\|w\|_q^{q-2}\|f\|_q^2 + \langle f,\phi\rangle.
\end{align*}
By the assumptions of the theorem, $1-c\frac{q\sqrt{\delta_{\mathsf{f}}}}{4} \geq 0$, so by $J_q \leq I_q$ we obtain 
\begin{align*}
&\mu \|w\|_q^q + \biggl[q-1- c(q-1)\frac{q\sqrt{\delta_{\mathsf{f}}}}{2} - c(\delta_{\mathsf{f}} + 2 \sqrt{\delta_{\mathsf{f}}\delta} + \delta) \frac{q^2}{16}- c(q-2)\frac{q^2\delta_{\mathsf{f}}}{16} - Q\biggr]J_q \\
& \leq  \mu_0\|w\|_q^{q}+ C_1\|w\|_q^{q-2}\|f\|_q^2 + \langle f,\phi\rangle.
\end{align*}

Next, suppose that $1-\frac{q\sqrt{\delta_{\mathsf{f}}}}{4}-\frac{q\sqrt{\delta}}{4}<0$, but $1-\frac{q\sqrt{\delta_{\mathsf{f}}}}{4} \geq 0$. We select $\gamma=\frac{1}{q\sqrt{\delta_{\mathsf{f}}}}$, $\gamma_2=\frac{1}{q\sqrt{\delta}}$, and $\gamma_1=\frac{1}{4}$. Then the term $\bar{J}_q$ is no longer present, so using $\bar{I}_q \leq I_q$ we obtain
\begin{align*}
\mu \|w\|_q^q & + \biggl[1+c\biggl(1-\frac{q\sqrt{\delta_{\mathsf{f}}}}{2} - \frac{q\sqrt{\delta}}{4}  \biggr)\biggr]I_q \\
&+ \biggl[q-2-c\frac{q\sqrt{\delta_{\mathsf{f}}}}{4} - c(q-2)\frac{q\sqrt{\delta_{\mathsf{f}}}}{2} - c \frac{q\sqrt{\delta_{\mathsf{f}}} + q\sqrt{\delta}}{4} -c(q-2)\frac{q^2\delta_{\mathsf{f}}}{16} - Q \biggr]J_q \\
& \leq  \mu_0\|w\|_q^{q}+ C_1\|w\|_q^{q-2}\|f\|_q^2 + \langle f,\phi\rangle. \notag
\end{align*}
Thus, since $1+c\bigl(1-\frac{q\sqrt{\delta_{\mathsf{f}}}}{2} - \frac{q\sqrt{\delta}}{4}  \bigr) \geq 0$ by the assumptions of the theorem, we have using $J_q \leq I_q$
\begin{align*}
\mu \langle |w|^q \rangle & + \biggl[q-1+c - c \frac{q\sqrt{\delta}}{2}- c\frac{q^2\sqrt{\delta_{\mathsf{f}}}}{2}  -c(q-2)\frac{q^2\delta_{\mathsf{f}}}{16} - Q\biggr]J_q \\
& \leq  \mu_0\|w\|_q^{q}+ C_1\|w\|_q^{q-2}\|f\|_q^2 + \langle f,\phi\rangle, \notag
\end{align*}

Finally, suppose that $1-\frac{q\sqrt{\delta_{\mathsf{f}}}}{4} < 0$. We select $\gamma=\gamma_1=\frac{1}{q\sqrt{\delta_{\mathsf{f}}}}$, $\gamma_2=\frac{1}{q\sqrt{\delta}}$. Then using $\bar{I}_q \leq I_q$, $\bar{J}_q \leq J_q$ we obtain
\begin{align*}
\mu \|w\|_q^q & + \biggl[1+c\biggl(1-\frac{q\sqrt{\delta_{\mathsf{f}}}}{2} - \frac{q\sqrt{\delta}}{4}  \biggr)\biggr]I_q + \biggl[q-2+c(q-2)\bigg(1-\frac{q\sqrt{\delta_{\mathsf{f}}}}{4}\bigg) \\
&-c\frac{q\sqrt{\delta_{\mathsf{f}}}}{4} - c(q-2)\frac{q\sqrt{\delta_{\mathsf{f}}}}{2} - c \frac{q\sqrt{\delta_{\mathsf{f}}} + q\sqrt{\delta}}{4} -c(q-2)\frac{q\sqrt{\delta_{\mathsf{f}}}}{4} - Q \biggr]J_q \\
&  \leq  \mu_0\|w\|_q^{q}+ C_1\|w\|_q^{q-2}\|f\|_q^2 + \langle f,\phi\rangle. \notag
\end{align*}
Since $1+c\bigl(1-\frac{q\sqrt{\delta_{\mathsf{f}}}}{2} - \frac{q\sqrt{\delta}}{4}  \bigr) \geq 0$ by the assumptions of the theorem, we have using $J_q \leq I_q$
\begin{align*}
\mu \|w\|_q^q & + \biggl[q-1+c(q-1) - c\frac{q\sqrt{\delta}}{2} - c(q-1)q\sqrt{\delta_{\mathsf{f}}} - Q \biggr]J_q \\
& \leq  \mu_0\|w\|_q^{q}+ C_1\|w\|_q^{q-2}\|f\|_q^2 + \langle f,\phi\rangle, \notag
\end{align*}
In all three cases, the coefficient of $J_q$ is positive.
We have proved \eqref{star_ineq_}.

\smallskip

\textbf{Case $c<0$}. In \eqref{be3}, select $\gamma=\gamma_1=\frac{1}{q\sqrt{\delta_{\mathsf{f}}}}$, $\gamma_2=\frac{1}{q\sqrt{\delta}}$:
\begin{align*}
&\mu \|w\|_q^q + \bigg(1-|c|\frac{q\sqrt{\delta_{\mathsf{f}}}}{4}\bigg)I_q^{}  \\
& + \biggl[q-2- |c|(q-1)\frac{q\sqrt{\delta_{\mathsf{f}}}}{2}-|c|(q-2)\frac{q\sqrt{\delta_{\mathsf{f}}}}{4}   - |c|\frac{q\sqrt{\delta}}{4} - Q\biggr]J_q \notag   \\
&- |c|\bigg(1+\frac{q\sqrt{\delta_{\mathsf{f}}}}{4}+\frac{q\sqrt{\delta}}{4}\bigg)\bar{I}_{q}^{} - |c|(q-2)\bigg(1+\frac{q\sqrt{\delta_{\mathsf{f}}}}{4}\bigg)\bar{J}_{q}^{}\leq  \mu_0\|w\|_q^{q}+ C_1\|w\|_q^{q-2}\|f\|_q^2 + \langle f,\phi\rangle.
\end{align*} 
Using $I_q \geq \bar{I}_q$, $J_q \geq \bar{J}_q$, we obtain
\begin{align*}
&\mu \|w\|_q^q + \bigg(1- |c|\bigg(1+\frac{q\sqrt{\delta_{\mathsf{f}}}}{2}+\frac{q\sqrt{\delta}}{4}\bigg) \bigg)I_q^{}  \\
& + \biggl[q-2- |c|(q-1)\frac{q\sqrt{\delta_{\mathsf{f}}}}{2}-|c|(q-2)\frac{q\sqrt{\delta_{\mathsf{f}}}}{4}- |c|\frac{q\sqrt{\delta}}{4} -|c|(q-2)\bigg(1+\frac{q\sqrt{\delta_{\mathsf{f}}}}{4}\bigg)  - Q \biggr]J_q \notag \\
& \leq  \mu_0\|w\|_q^{q}+ C_1\|w\|_q^{q-2}\|f\|_q^2 + \langle f,\phi\rangle.
\end{align*}
By the assumptions of the theorem, $1- |c|\big(1+\frac{q\sqrt{\delta_{\mathsf{f}}}}{2}+\frac{q\sqrt{\delta}}{4}\big) \geq 0$. Therefore, by $I_q \geq J_q$,
\begin{align*}
&\mu \|w\|_q^q  + \biggl[q-1  - |c|\bigg(1+\frac{q\sqrt{\delta_{\mathsf{f}}}}{2}+\frac{q\sqrt{\delta}}{4}\bigg) \\
&- |c|(q-1)\frac{q\sqrt{\delta_{\mathsf{f}}}}{2}-|c|(q-2)\frac{q\sqrt{\delta_{\mathsf{f}}}}{4}- |c|\frac{q\sqrt{\delta}}{4} -|c|(q-2)\bigg(1+\frac{q\sqrt{\delta_{\mathsf{f}}}}{4}\bigg)  - Q \biggr]J_q \notag \\
& \leq  \mu_0\|w\|_q^{q}+ C_1\|w\|_q^{q-2}\|f\|_q^2 + \langle f,\phi\rangle,
\end{align*}
where the coefficient of $J_q$ is strictly positive by the assumptions of the theorem. We have proved \eqref{star_ineq_}.

\smallskip

\textbf{4.~}We estimate the term $\langle f, \phi \rangle$ by Lemma \ref{f_est_lem}: For each $\varepsilon_0>0$ there exists a constant $C=C(\varepsilon_0)<\infty$  such that 
$$
\langle f, \phi \rangle \leq \varepsilon_0 I_q + C \|w\|_q^{q-2} \|f\|^2_q.
$$

We choose $\varepsilon_0>0$  so small that in the estimates below we can ignore 
$\varepsilon_0 I_q$.  

Then \eqref{star_ineq_} yields the inequalities
$$ \|\nabla u_n\|_{q} \leq  K_1(\mu-\mu_0)^{-\frac{1}{2}} \|f\|_q, \quad K_1:=(C+C_1)^{\frac{1}{2}}, $$
$$\|\nabla u_n \|_{q j} \leq  K_2(\mu-\mu_0)^{\frac{1}{q}-\frac{1}{2}} \|f\|_q, \quad K_2:=C_{S}\eta^{-\frac{1}{q}}(q^2/4)^{\frac{1}{q}} (C+C_1)^{\frac{1}{2}-\frac{1}{q}},$$
where $C_S$ is the constant in the Sobolev Embedding Theorem.

If $c>0$ then $\delta_1=\delta<1$. If $c<0$ then 
elementary arguments show that, by the assumptions of the theorem, $\delta_1=(1-|c|)^{-2} \delta<1$. 
Therefore, \cite[Theorem 3.5]{KiS}
($
(\mu+\Lambda_q(a,b))^{-1}=s{\mbox-}L^q{\mbox-}\lim_n (\mu+\Lambda_q(a_n,b_n))^{-1}
$)
yields \eqref{reg_double_star}.
The proof of Theorem \ref{thm:reg} is completed.

\section{The iteration procedure}
\label{iteration_sect}

The following is a direct extension of the iteration procedure in \cite{KS}. Let $a \in (H_{u})$.

Recall that a vector field $b:\mathbb R^d \rightarrow \mathbb R^d$ belongs to $\mathbf F_{\delta_1}(A)$, $\delta_{1}>0$, the class of form-bounded vector fields (with respect to $A \equiv A_2:= [-\nabla \cdot a \cdot \nabla\upharpoonright C_c^\infty]^{\rm clos}_{2 \to 2}$), if $b_a^2:=b \cdot a^{-1} \cdot b
 \in L^1_\loc$ and there exists a constant $\lambda=\lambda_{\delta_1}>0$ such that 
$\|b_a(\lambda + A)^{-\frac{1}{2}}\|_{2\rightarrow 2}\leq \sqrt{\delta_1}.$

Consider $$\{a_n\}_{n=1}^\infty \subset  [C^1]^{d \times d} \cap (H_{u,\sigma,\xi})$$
 and 
$$\{b_n\}_{n=1}^\infty \subset [C^1]^d \cap \bigcap_{m \geq 1}\mathbf{F}_{\delta_1}(A^m), \quad \delta_1<4, \quad \lambda \neq \lambda(n,m).$$ Here $A^m \equiv A(a_m)$.

By \cite[Theorem 3.2]{KiS}, $-\Lambda_r(a_n,b_n):=\nabla \cdot a_n \cdot \nabla - b_n\cdot \nabla$, 
$D(\Lambda_r(a_n,b_n))=W^{2,r}$, is the generator of a positivity preserving $L^\infty$ contraction quasi contraction $C_0$ semigroup on $L^r$, $r \in \big]\frac{2}{2-\sqrt{\delta_1}},\infty[$, with the resolvent set of $-\Lambda_r(a_n,b_n)$ containing $\mu>\omega_r:=\frac{\lambda\delta_1}{2(r-1)}$ for all $n \geq 1$.

Set
$u_n := (\mu + \Lambda_r(a_n, b_n))^{-1} f$, $f \in L^1 \cap L^\infty$ and $g:=u_m- u_n$.

\begin{lemma}
\label{lem1}
There are positive constants $C=C(d), k=k(\delta_1)$ such that
\[
\|g \|_{r j} \leq \big( C \sigma^{-1} (\delta_1  + 2\xi\sigma^{-1})(1+2\xi) \|\nabla u_m \|_{q j}^2 \big)^\frac{1}{r} \big(r^{2k} \big)^\frac{1}{r} \|g\|_{x^\prime (r-2)}^{1-\frac{2}{r}},
\]
where $q \in \big]\frac{2}{2-\sqrt{\delta_1}} \vee (d-2), \frac{2}{\sqrt{\delta_1}}\big [, \; 2 x = q j, \; j=\frac{d}{d-2}, \; x^\prime := \frac{x}{x-1}$ and $x^\prime(r-2) > \frac{2}{2-\sqrt{\delta_1}}$, $\mu > \lambda_{\delta_1}$. 
\end{lemma}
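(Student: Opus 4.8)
The plan is to follow the Moser-type iteration scheme from \cite{KS}, which starts from the differential identity satisfied by $g = u_m - u_n$. First I would write down the equation for $g$: since $(\mu + \Lambda_r(a_n,b_n))u_n = f$ and $(\mu + \Lambda_r(a_m,b_m))u_m = f$, subtracting gives
$$
(\mu - \nabla \cdot a_n \cdot \nabla + b_n \cdot \nabla) g = \nabla \cdot (a_m - a_n) \cdot \nabla u_m - (b_m - b_n) \cdot \nabla u_m,
$$
so $g$ solves an elliptic equation with the \emph{same} principal part and drift as $u_n$ but with a right-hand side built from the differences of the coefficients against $\nabla u_m$. The strategy is to test this equation against $|g|^{r-2} g$ (the standard $L^r$ energy multiplier), integrate by parts, and use the ellipticity bound $\sigma I \le a_n \le \xi I$ together with $b_n \in \mathbf F_{\delta_1}(A^n)$ to absorb the drift term into the ellipticity term; this is exactly the mechanism that produces the factor $\delta_1 + 2\xi\sigma^{-1}$ and the constant $C\sigma^{-1}(1+2\xi)$.

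Second, having obtained from the energy inequality a bound of the form
$$
\|\nabla |g|^{r/2}\|_2^2 \;\lesssim\; \sigma^{-1}(\delta_1 + 2\xi\sigma^{-1})(1+2\xi)\,\big\langle |a_m - a_n|\,|\nabla u_m| + |b_m-b_n|\,|g|^{?}\big\rangle\cdots
$$
the key is to handle the right-hand side by Hölder's inequality so that it factors as (something involving $\|\nabla u_m\|_{qj}^2$) times (a power of a lower $L^s$-norm of $g$). Here the exponent bookkeeping is the heart of the matter: with $2x = qj$ and $x' = x/(x-1)$, Hölder splits the integral $\langle (\cdots) |g|^{r-2}\rangle$ into an $L^x$ factor that is controlled by $\|\nabla u_m\|_{qj}^2 = \|\nabla u_m\|_{2x}^2$ and an $L^{x'}$ factor $\|g\|_{x'(r-2)}^{r-2}$. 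Then applying the Sobolev embedding $\||g|^{r/2}\|_{2j}^{2} \le C_S^2 \|\nabla |g|^{r/2}\|_2^2$ converts the left-hand side into $\|g\|_{rj}^r$, and rearranging yields precisely the claimed inequality, with the $r^{2k}$ factor arising from tracking the polynomial-in-$r$ constants $(r-1)$, $(r-2)$, $q^2/4$ etc.\ generated along the way (this is where $k = k(\delta_1)$ enters — it encodes how the absorption constant degrades as $\delta_1 \uparrow 4$). The constraint $x'(r-2) > \frac{2}{2-\sqrt{\delta_1}}$ is exactly what is needed so that the lower norm on the right is one to which the resolvent bounds and the embedding apply, and $q < 2/\sqrt{\delta_1}$ is the range in which the $W^{1,q}$ a priori estimate of Theorem \ref{thm:reg} (equivalently \cite[Lemma 5]{KS}) is available so that $\|\nabla u_m\|_{qj}$ is finite and controllable.

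The main obstacle I expect is the absorption step: after integrating by parts in $\langle -\nabla\cdot a_n\cdot\nabla g,\, |g|^{r-2}g\rangle$ one gets a positive quadratic form in $\nabla g$ of size $\ge (r-1)\sigma\|\,|g|^{(r-2)/2}\nabla g\,\|_2^2$ up to the usual $|g|^{r-2}(\nabla|g|)^2$ vs $|g|^{r-2}|\nabla g|^2$ comparison, while the drift term $\langle b_n\cdot\nabla g,\,|g|^{r-2}g\rangle$ must be dominated using the form-boundedness $\|b_{n,a_n}(\lambda+A^n)^{-1/2}\|_{2\to2}\le\sqrt{\delta_1}$ — but form-boundedness is stated with respect to $a_n$-weighted quantities, so one has to pass carefully between $|b_n|^2$ and $b_n\cdot a_n^{-1}\cdot b_n$ using $\sigma I\le a_n$, which is the source of the $2\xi\sigma^{-1}$ loss, and one needs $\delta_1 < 4$ strictly so that $(r-1)\sigma$ beats the constant $\frac{\delta_1}{4}$ times $(r-1)^2$ coming from the quadratic estimate on the drift (equivalently $r > \frac{2}{2-\sqrt{\delta_1}}$). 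Once the drift is absorbed, the rest is Hölder and Sobolev bookkeeping; I would keep the exponents symbolic and only at the end identify the power of $r$ and the structure of the constant to match the statement.
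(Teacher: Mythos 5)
Your plan is structurally the same as the argument the paper refers to (the Moser-type base inequality of \cite[Lemma 3.12]{KiS} / \cite[Lemma 6]{KS}): subtract the two resolvent equations, test the resulting equation for $g=u_m-u_n$ against $|g|^{r-2}g$, integrate by parts, use uniform ellipticity together with $b_n\in\mathbf F_{\delta_1}(A^n)$ to absorb the drift, then split the residual term by H\"{o}lder with exponents $x,x'$ and close with the Sobolev embedding $\||g|^{r/2}\|_{2j}\lesssim\|\nabla|g|^{r/2}\|_2$. The paper itself only cites those proofs, so there is no genuinely different route here.

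Two small points of imprecision in your bookkeeping are worth flagging, though neither is a gap in the strategy. First, your attribution of the factor $2\xi\sigma^{-1}$ to the passage between $|b_n|^2$ and $b_n\cdot a_n^{-1}\cdot b_n$ is not quite right: since $\xi^{-1}|b_n|^2\le b_n\cdot a_n^{-1}\cdot b_n\le\sigma^{-1}|b_n|^2$, converting an $\mathbf F_{\delta_1}(A^n)$ bound into a pointwise bound on $|b_n|^2$ costs a factor $\xi$, while the separate $\sigma^{-1}$ factors arise from dividing through by the lower ellipticity bound when you pass from $\langle\nabla g\cdot a_n\cdot\nabla g\,|g|^{r-2}\rangle$ to $\|\nabla|g|^{r/2}\|_2^2$; you should keep these two losses distinct when you actually assemble the constant $C\sigma^{-1}(\delta_1+2\xi\sigma^{-1})(1+2\xi)$, since the $(\delta_1+2\xi\sigma^{-1})$ piece really is a sum of the drift contribution and the $\nabla\cdot(a_m-a_n)\nabla u_m$ contribution. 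Second, after Young's inequality the term coming from $(b_m-b_n)\cdot\nabla u_m$ does not carry a $\nabla g$ factor, so it has to be handled by the $L^x$--$L^{x'}$ H\"{o}lder split directly against $|g|^{r-2}$ together with the form-bound on $b_m-b_n$ (both $b_m$ and $b_n$ lie in $\bigcap_m\mathbf F_{\delta_1}(A^m)$ with $\lambda$ independent of $n,m$), not by absorption into the ellipticity term as the drift $b_n\cdot\nabla g$ is; your sketch blurs this distinction. Finally, note that the hypothesis $q<2/\sqrt{\delta_1}$ is not actually needed for the inequality of this lemma to hold (the lemma is an unconditional reverse-H\"{o}lder estimate with $\|\nabla u_m\|_{qj}$ allowed to be infinite); it enters only at the next stage, in Lemma \ref{lem2}, where one assumes $\sup_m\|\nabla u_m\|_{qj}<\infty$ and needs the a priori estimate of Theorem \ref{thm:reg}(\textit{ii}) to verify it.
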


The proof follows closely \cite[proof of Lemma 3.12]{KiS} or \cite[proof of Lemma 6]{KS}.

Iterating the inequality of Lemma \ref{lem1}, we arrive at

\begin{lemma}
\label{lem2}
In the notation of Lemma \ref{lem1}, assume that $\sup_m \|\nabla u_m \|_{q j}^2<\infty$, $\mu>\mu_0$. Then for any $r_0 > \frac{2}{2-\sqrt{\delta_1}}$ 
$$
\|g\|_\infty \leq B \|g \|_{r_0}^\gamma, \quad \mu \geq 1 + \mu_0 \vee \lambda_{\delta_1},
$$
where $\gamma = \big( 1 - \frac{x^\prime}{j} \big) \big( 1 - \frac{x^\prime}{j} + \frac{2 x^\prime}{r_0}\big)^{-1} > 0$, and $B=B(d, \delta_1)<\infty$.
\end{lemma}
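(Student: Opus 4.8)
The plan is to iterate the inequality of Lemma \ref{lem1} in the spirit of the classical Moser iteration / Nash-type argument, controlling the accumulated constants. Fix $r_0 > \frac{2}{2-\sqrt{\delta_1}}$ and define a sequence $r_0 < r_1 < r_2 < \cdots$ by demanding that at each step the exponent appearing on the right-hand side of Lemma \ref{lem1}, namely $x'(r-2)$ with the ``input'' norm $\|g\|_{x'(r-2)}$, equals the ``output'' exponent $r$ of the previous step; that is, set $r_{k+1}$ so that $x'(r_{k+1}-2) = r_k j / j$? More precisely, since Lemma \ref{lem1} estimates $\|g\|_{rj}$ in terms of $\|g\|_{x'(r-2)}$, I want to run it with a sequence $\rho_k$ satisfying $\rho_{k+1} = \rho_k j$ and $x'(\rho_{k+1}-2) \le \rho_k$, i.e.\ choose $\rho_k \uparrow \infty$ geometrically (roughly $\rho_k \approx j^k r_0$) and check that the constraint $x'(\rho_k - 2) > \frac{2}{2-\sqrt{\delta_1}}$ holds for $k$ large (it does, since $\rho_k \to \infty$ and $x' < \infty$), possibly after replacing $r_0$ by $\rho_{k_0}$ for a fixed small $k_0$ absorbed into the final constant.

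First I would record the one-step estimate in the clean form
\[
\|g\|_{\rho_{k+1}} \le \big(D\,\rho_{k}^{2k_*}\big)^{1/\rho_{k+1}} \cdot \|g\|_{\rho_{k}}^{(1-2/\rho_{k+1})\cdot(\rho_{k+1}/\rho_k)\cdot(\text{correction})},
\]
where $D = D(d,\delta_1)$ collects $C\sigma^{-1}(\delta_1 + 2\xi\sigma^{-1})(1+2\xi)\sup_m\|\nabla u_m\|_{qj}^2$, finite by hypothesis and by Theorem \ref{thm:reg}(\textit{ii}) (giving the uniform bound on $\|\nabla u_m\|_{qj}$), and $k_* = k(\delta_1)$. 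Taking logarithms, one gets $\log\|g\|_{\rho_{k+1}} \le \theta_k \log\|g\|_{\rho_k} + \beta_k$ with $\theta_k \in (0,1)$ of the form $1 - c/\rho_k + O(\rho_k^{-1})$ and $\beta_k = \rho_{k+1}^{-1}(\log D + 2k_*\log\rho_k)$. Since $\rho_k$ grows geometrically, $\prod_k \theta_k$ converges to a strictly positive limit $\gamma > 0$, and $\sum_k \beta_k \prod_{l>k}\theta_l$ converges (the $\log\rho_k \sim k\log j$ is killed by the geometric decay of $\rho_{k+1}^{-1}$). Passing $k \to \infty$ and using that $\|g\|_{\rho_k} \to \|g\|_\infty$ (since $g = u_m - u_n \in L^1\cap L^\infty$, the $L^{\rho_k}$ norms converge to the sup norm) yields
\[
\log\|g\|_\infty \le \gamma \log\|g\|_{r_0} + \log B,
\]
i.e.\ $\|g\|_\infty \le B\|g\|_{r_0}^\gamma$, with $B = B(d,\delta_1) < \infty$ and $\gamma = \big(1 - \tfrac{x'}{j}\big)\big(1 - \tfrac{x'}{j} + \tfrac{2x'}{r_0}\big)^{-1}$ coming out of the infinite product once the bookkeeping is done carefully (this is exactly the exponent one obtains from telescoping $1 - 2/\rho_{k+1}$ against the ratio $\rho_{k+1}/\rho_k = j$, summed geometrically). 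The condition $\mu \ge 1 + \mu_0 \vee \lambda_{\delta_1}$ is what makes the resolvents well defined and, crucially, makes the $\mu$-dependent prefactors $(\mu - \mu_0)^{\cdots}$ in Theorem \ref{thm:reg}(\textit{ii}) bounded, so that $\sup_m\|\nabla u_m\|_{qj}$ is finite with a constant not depending on $\mu$ in this range.

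The main obstacle is the constant tracking in the iteration: one must verify that the product $\prod_{k\ge 0}\theta_k$ converges to the claimed positive $\gamma$ and that the series $\sum_k \beta_k\prod_{l>k}\theta_l$ converges, which requires the exponents $\theta_k$ to approach $1$ no faster than $1 - O(j^{-k})$ and the factors $\rho_k^{2k_*/\rho_{k+1}}$ to be summable in $\log$ — both follow from the geometric growth $\rho_k \asymp j^k$, but getting the precise value of $\gamma$ (rather than merely ``some $\gamma > 0$'') demands care about the correction factor $(1-2/\rho_{k+1})$ and the choice of the initial exponent, and about replacing $r_0$ by a shifted exponent $\rho_{k_0}$ with the discrepancy absorbed into $B$. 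Everything else — the finiteness of $\sup_m\|\nabla u_m\|_{qj}$ from Theorem \ref{thm:reg}(\textit{ii}), the interpolation structure of Lemma \ref{lem1}, and the convergence $\|g\|_{\rho_k}\to\|g\|_\infty$ — is standard once the chain is set up. I would present this as: (1) choice of the exponent sequence and verification of the constraints of Lemma \ref{lem1} along it; (2) the logarithmic recursion; (3) summation of the recursion and identification of $\gamma$ and $B$; (4) the limit $k\to\infty$.
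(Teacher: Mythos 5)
Your overall strategy is the right one and is exactly what the paper (citing \cite[Lemma 3.13]{KiS} and \cite[Lemma 7]{KS}) has in mind: iterate Lemma \ref{lem1} along a growing chain of exponents, absorb the $\big(Dr^{2k}\big)^{1/r}$ prefactors into a convergent product, and pass to the limit using $\|g\|_{\rho_k}\to\|g\|_\infty$ for $g\in L^1\cap L^\infty$. The identification that $\sup_m\|\nabla u_m\|_{qj}$ is bounded uniformly for $\mu\ge 1+\mu_0\vee\lambda_{\delta_1}$ via Theorem \ref{thm:reg}(\textit{ii}) is also the correct ingredient.

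However, the specific chain of exponents you propose does not close. You set $\rho_{k+1}=\rho_k j$ together with the side constraint $x'(\rho_{k+1}-2)\le\rho_k$, but the latter reads $(x'j-1)\rho_k\le 2x'$, which fails for all large $k$ since $x'>1$, $j>1$ and $\rho_k\to\infty$. The chain that actually makes Lemma \ref{lem1} self-reproducing is obtained by requiring the \emph{input} exponent of step $k{+}1$ to equal the \emph{output} exponent of step $k$: if $s_k$ are the exponents at which we control $\|g\|_{s_k}$, apply Lemma \ref{lem1} with $r$ solving $x'(r-2)=s_k$, i.e.\ $r=s_k/x'+2$, and set $s_{k+1}:=rj=j\big(\tfrac{s_k}{x'}+2\big)$. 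This grows geometrically at rate $j/x'>1$ (here one uses $x'<j$, which holds precisely because $q>d-2$ forces $x=\tfrac{qj}{2}>\tfrac{d}{2}$), not at rate $j$. The $\gamma$ in the statement comes from the telescoping identity $s_k+2x'=\tfrac{x'}{j}\,s_{k+1}$, which gives
\[
\prod_{k=0}^{N-1}\Big(1-\tfrac{2}{s_k/x'+2}\Big)
=\prod_{k=0}^{N-1}\frac{s_k}{s_k+2x'}
=\Big(\frac{j}{x'}\Big)^{N}\frac{s_0}{s_N}
\ \longrightarrow\ \frac{1-\tfrac{x'}{j}}{\,1-\tfrac{x'}{j}+\tfrac{2x'}{r_0}\,}
\qquad (N\to\infty),
\]
after inserting $s_N=(j/x')^N s_0+2j\frac{(j/x')^N-1}{j/x'-1}$. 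With your $\rho_{k+1}=\rho_k j$ the ratio in the telescoping step would be $j$, not $j/x'$, and you would not recover the stated $\gamma$. The rest of your outline (logarithmic recursion, convergence of $\sum_k\beta_k\prod_{l>k}\theta_l$ since $s_k^{-1}\log s_k$ decays geometrically, finite constant $B$, passage $k\to\infty$) is sound once the chain is corrected.
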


The proof repeats \cite[proof of Lemma 3.13]{KiS} or \cite[proof of Lemma 7]{KS}.

\begin{remark*}
The assumption $\sup_m \|\nabla u_m \|_{q j}^2<\infty$ in Lemma \ref{lem2} is crucial and holds e.g.\,in the assumptions of Theorem \ref{thm:reg}(\textit{ii}). 
\end{remark*}

\section{Proof of Theorem \ref{thm:feller}}

By Lemma \ref{lem2}
and the second inequality in \eqref{reg_double_star}, we have for all
$r_0 > \frac{2}{2-\sqrt{\delta_1}}$
$$
\|u_n-u_m\|_\infty \leq B \|u_n-u_m \|_{r_0}^\gamma, \quad \mu \geq 1 + \mu_0 \vee \lambda_{\delta_1},
$$
where $\gamma>0$, $B<\infty$, and $u_n := (\mu + \Lambda_{r_0}(a_n, b_n))^{-1} f$, $f \in L^1 \cap L^\infty$.
By \cite[Theorem 3.5]{KiS},
$$
(\mu+\Lambda_{r_0}(a,b))^{-1}=s{\mbox-}L^{r_0}{\mbox-}\lim_n (\mu+\Lambda_{r_0}(a_n,b_n))^{-1},
$$
so  $\{u_n\}$ is fundamental in $C_\infty$.

\begin{lemma}
\label{lem5}
$s \mbox{-} C_\infty \mbox{-} \lim_{\mu \uparrow \infty} \mu (\mu + \Lambda_{C_\infty}(a_n,b_n))^{-1} = 1$ uniformly in $ n.$
\end{lemma}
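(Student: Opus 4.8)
The plan is to reduce to a dense subclass, transfer the elementary resolvent fact ``$\mu R_\mu\to 1$'' from the approximants, measure the error in $L^{r_0}$ with $r_0<2$ (where the relevant bounds are uniform in $n$) rather than in $C_\infty$, and finally lift back to $C_\infty$ via the iteration of Section~\ref{iteration_sect}.

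\smallskip

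\textbf{Step 1 (reduction).} By \cite[Theorem~3.2]{KiS} each $e^{-t\Lambda_{C_\infty}(a_n,b_n)}$ is an $L^\infty$ contraction, hence $\|\mu(\mu+\Lambda_{C_\infty}(a_n,b_n))^{-1}\|_{C_\infty\to C_\infty}\le 1$ for every $\mu>0$ and every $n$. Since $C_c^\infty$ is dense in $C_\infty$, it then suffices to prove, for each fixed $f\in C_c^\infty$, that $\sup_n\|\mu R_\mu^{(n)}f-f\|_\infty\to 0$ as $\mu\to\infty$, where $R_\mu^{(n)}:=(\mu+\Lambda_{C_\infty}(a_n,b_n))^{-1}$.

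\smallskip

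\textbf{Step 2 (error equation; uniform $L^{r_0}$ bound).} Since $f\in C_c^\infty\subset (1-\Delta)^{-1}C_\infty=D(\Lambda_{C_\infty}(a_n,b_n))$,
\[
\mu R_\mu^{(n)}f-f=-R_\mu^{(n)}g_n,\qquad g_n:=\Lambda_{C_\infty}(a_n,b_n)f=-\Delta f-\nabla\cdot(a_n-I)\cdot\nabla f+b_n\cdot\nabla f ,
\]
which is supported in $K:=\supp f$. Because $\|\mathsf f_n\|_\infty=1$, $\nabla a_n=c[({\rm div}\,\mathsf f_n)\mathsf f_n+\mathsf f_n\cdot\nabla\mathsf f_n]$, and $b_n$, $\nabla_i\mathsf f_n$ lie in their form classes with one common $\lambda$ (see \eqref{incl}), form-boundedness with a common $\lambda$ yields $\sup_n\bigl(\|b_n\|_{L^2(K)}+\|\nabla\mathsf f_n\|_{L^2(K)}\bigr)<\infty$, whence $\sup_n\|g_n\|_2<\infty$. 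Under the hypotheses of Theorem~\ref{thm:reg}(\textit{ii}) one has $\delta_1<1$, so I fix $r_0\in\bigl(\tfrac2{2-\sqrt{\delta_1}},2\bigr)$; then, $g_n$ being supported in the fixed compact $K$, $\sup_n\|g_n\|_{r_0}\le|K|^{\frac1{r_0}-\frac12}\sup_n\|g_n\|_2<\infty$. (Also $g_n\in L^1\cap L^\infty$ for each fixed $n$, so $R_\mu^{(n)}g_n$ is covered by the $L^{r_0}$-realization and by Section~\ref{iteration_sect}.)

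\smallskip

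\textbf{Step 3 (decay in $L^{r_0}$, then iteration).} By $L^{r_0}$-quasi contractivity, $\|R_\mu^{(n)}g_n\|_{r_0}\le(\mu-\omega_{r_0})^{-1}\|g_n\|_{r_0}$, so $\sup_n\|R_\mu^{(n)}g_n\|_{r_0}\to 0$ as $\mu\to\infty$. I then run the iteration of Section~\ref{iteration_sect} on $v_n:=R_\mu^{(n)}g_n$: the equation $(\mu+\Lambda_{r_0}(a_n,b_n))v_n=-\Delta f-\nabla\cdot(a_n-I)\cdot\nabla f+b_n\cdot\nabla f$ has the same structure as the one satisfied by the difference $u_m-u_n$ in Lemma~\ref{lem1} (with the fixed $f$ in the role of $u_n$, the bounded matrix $a_n-I=c\,\mathsf f_n\otimes\mathsf f_n$ in the role of $a_m-a_n$, and $b_n\in\mathbf F_\delta$ in the role of $b_n-b_m$; the extra term $-\Delta f\in L^1\cap L^\infty$ is harmless). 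Since $\|\nabla f\|_{qj}$ is a finite constant (the analogue of $\sup_m\|\nabla u_m\|_{qj}$, finite by Theorem~\ref{thm:reg}(\textit{ii}) since $f\in L^1\cap L^\infty\subset L^q$), the arguments of Lemmas~\ref{lem1}--\ref{lem2} (cf.\,\cite{KS}) give $\|v_n\|_\infty\le B\,\|v_n\|_{r_0}^{\gamma}$ for $\mu\ge 1+\mu_0\vee\lambda_{\delta_1}$, with $\gamma\in(0,1)$ and $B<\infty$ independent of $n$. Combining, $\sup_n\|\mu R_\mu^{(n)}f-f\|_\infty=\sup_n\|v_n\|_\infty\le B\bigl(\sup_n\|v_n\|_{r_0}\bigr)^\gamma\to 0$ as $\mu\to\infty$, which is the assertion.

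\smallskip

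\textbf{Main obstacle.} The whole difficulty is the uniformity in $n$: for fixed $n$ the convergence $\mu R_\mu^{(n)}\to 1$ is elementary, but its rate is controlled by $\|\Lambda_n f\|_\infty=\|g_n\|_\infty$, which is \emph{not} bounded in $n$ because $b_n$ is only form-bounded. The device is to measure $g_n$ and $R_\mu^{(n)}g_n$ in $L^{r_0}$ with $r_0<2$ — legitimate precisely because $g_n$ is compactly supported and $\delta_1<1$ — and then to invoke the iteration of Section~\ref{iteration_sect} to upgrade $L^{r_0}$-smallness to $C_\infty$-smallness. The point needing care is verifying that the constants $B,\gamma$ coming from Lemmas~\ref{lem1}--\ref{lem2}, now applied to $v_n$, depend on $n$ only through quantities already known to be uniformly bounded (the common $\lambda$, the form-bounds $\delta_1,\delta_a,\delta_{\mathsf f}$, the coupling $c$, and $\|f\|_{C^2}$).
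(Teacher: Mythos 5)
Your overall strategy is sound and almost certainly mirrors the cited source (\cite[Lemma 3.16]{KiS}, which the paper invokes instead of giving details): reduce to $f\in C_c^\infty$ via the uniform $C_\infty$-contraction bound, write the error as $-R_\mu^{(n)}g_n$ with $g_n=\Lambda_n f$ compactly supported, control $\sup_n\|g_n\|_{r_0}$ with $r_0\in(\tfrac{2}{2-\sqrt{\delta_1}},2)$ using form-boundedness on a fixed compact set (common $\lambda$, so the bound is $n$-uniform), let the $L^{r_0}$ quasi-contraction make $\|v_n\|_{r_0}$ small as $\mu\uparrow\infty$, and finish with the $L^{r_0}\to L^\infty$ fractional-power bound from the iteration machinery. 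The reduction to $\delta_1<1$ (so that $\tfrac{2}{2-\sqrt{\delta_1}}<2$) is correctly read off from the proof of Theorem~\ref{thm:reg}(\textit{ii}).

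One place where you can tighten the argument and remove the hand-waving about $-\Delta f$ being ``harmless'': do not split off $-\Delta f$ at all. Since $g_n=\Lambda_n f=-\nabla\cdot a_n\cdot\nabla f+b_n\cdot\nabla f$ and $\|a_n\|_\infty\le 1+|c|$ uniformly, the RHS already has exactly the divergence-plus-drift form used in Lemma~\ref{lem1} (with the fixed smooth $f$ in the role of $u_m$, the uniformly bounded $a_n$ in the role of $a_m-a_n$, and $b_n$ in the role of $b_m-b_n$), so no extra term needs separate treatment. That said, Lemmas~\ref{lem1}--\ref{lem2} are stated for the specific difference $g=u_m-u_n$, and invoking them for $v_n=R_\mu^{(n)}g_n$ really means re-running the Moser iteration with this new right-hand side and tracking that the resulting constants $B,\gamma$ depend only on $d$, $\delta_1,\delta_a,\delta_{\mathsf f}$, $c$, the common $\lambda$, and $\|f\|_{C^2}$, all independent of $n$ and of $\mu\ge 1+\mu_0\vee\lambda_{\delta_1}$. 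You flag this yourself at the end; it is the one step that needs to be written out rather than asserted by analogy, but there is no obstruction to doing so.
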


The proof follows closely \cite[proof of Lemma 3.16]{KiS}.

We are in position to complete the proof of Theorem \ref{thm:feller}.
The assertion (\textit{i}) follows from the fact that $\{u_n\}$ is fundamental in $C_\infty$ and Lemma \ref{lem5} 
by applying the Trotter Approximation Theorem. 
(\textit{ii}) is Theorem \ref{thm:reg}\eqref{reg_double_star}. The proof of (\textit{iii}) is standard.
The proof of Theorem \ref{thm:feller} is completed.

\begin{remark*}
The arguments of the present paper extend more or less directly to the time-dependent case $\partial_t - \nabla \cdot a(t,x) \cdot \nabla + b(t,x)\cdot \nabla$, cf.\,\cite{Ki}. 
\end{remark*}

\end{document}